\DeclareMathOperator{\Trig}{Trig}
\DeclareMathOperator{\loc}{loc}
\patchcmd{\thmhead}{(#3)}{#3}{}{}
\numberwithin{equation}{section}
\def\ps@pprintTitle{%
	\let\@oddhead\@empty
	\let\@evenhead\@empty
	\def\@oddfoot{}%
	\let\@evenfoot\@oddfoot}
\def\XXint#1#2#3{{\setbox0=\hbox{$#1{#2#3}{\int}$ }
		\vcenter{\hbox{$#2#3$ }}\kern-.6\wd0}}
\newtheorem{theorem}{Theorem}[section]
\newtheorem{lemma}[theorem]{Lemma}
\theoremstyle{definition}
\newtheorem{definition}[theorem]{Definition}
\theoremstyle{remark}
\newtheorem{remark}[theorem]{Remark}
\newif\ifdraft
\begin{document}
	
	\begin{frontmatter}
		
		
		
		
		\title{Bloch Wave Homogenization of Quasiperiodic Media}		
		\author{Sivaji Ganesh Sista\corref{cor1}}
		\ead{sivaji.ganesh@iitb.ac.in}
		\cortext[cor1]{Corresponding author}
		
		\author{Vivek Tewary\corref{cor2}}
		\ead{vivekt@iitb.ac.in}
		\address{Department of Mathematics, Indian Institute of Technology Bombay, Powai, Mumbai, 400076, India.\\\vspace{0.5cm}\today}
		\begin{abstract}
			Quasiperiodic media is a class of almost periodic media which is generated from periodic media through a ``cut and project" procedure. Bloch waves are typically defined through a direct integral decomposition of periodic operators. A suitable direct integral decomposition is not available for almost periodic operators. To remedy this, we lift an almost periodic operator to a degenerate periodic operator in higher dimensions. Approximate Bloch waves are obtained for a regularized version of the degenerate equation. Homogenized coefficients for quasiperiodic media are determined in terms of the first Bloch eigenvalue of the regularized lifted equation. A notion of quasiperiodic Bloch transform is defined and employed to obtain homogenization limit for an equation with highly oscillating quasiperiodic coefficients.
		\end{abstract}
 		
		\begin{keyword}
			Bloch eigenvalues \sep Quasiperiodic Operators \sep Homogenization \sep Almost Periodicity
			\MSC[2010] 47A55 \sep 35J15 \sep 35B27 \sep 34C27
		\end{keyword}
	
	\end{frontmatter}

\section{Introduction}
In this chapter, we will perform Bloch wave homogenization of the following equation with highly oscillatory quasiperiodic coefficients:
\begin{align}
-\nabla\cdot A\left(\frac{x}{\varepsilon}\right)\nabla u^\varepsilon(x)&=f\mbox{ in }\Omega\nonumber\\
u^\varepsilon&=0\mbox{ on }\partial\Omega,
\end{align} where $\Omega\subseteq\mathbb{R}^d$ is a bounded domain. 
Bloch wave homogenization is a framework developed by Conca and Vanninathan~\cite{Conca1997} for obtaining qualitative as well as quantitative results in periodic homogenization. Further, Bloch decomposition has been employed by Birman and Suslina~\cite{BirmanSuslina2003} to obtain order-sharp estimates for systems in the theory of homogenization with minimal regularity requirements. Let $M$ be an integer such that $M>d$ and let $Q=[0,2\pi)^M$ denote a parametrization of the $M$-dimensional torus $\mathbb{T}^M$. We make the following assumptions on the coefficient matrix $A=(a_{kl})_{k,l=1}^d$:
\begin{enumerate}[label={(H\arabic*)}] 
	\item\label{H1} The coefficients $a_{kl}$ are smooth, bounded real-valued functions defined on $\mathbb{R}^d$.
	\item\label{H2} The coefficient matrix $A$ is quasiperiodic, i.e., there exists a matrix $B$ with amooth $Q$-periodic entries and a constant $M\times d$ matrix $\Lambda$ such that $A=B\circ\Lambda$, viz., \begin{align*}
	\forall\,x\in\mathbb{R}^d\quad a_{kl}(x)=b_{kl}(\Lambda x),
	\end{align*} where 
	\begin{align}\label{irrationalslope}
	\Lambda^T p\neq 0 \mbox{ for  non-zero } p\in\mathbb{Z}^M.\end{align}
	\item\label{H3} The matrix $A$ is symmetric, i.e., $a_{kl}=a_{lk}$.
	\item\label{H4} The matrix $A$ is coercive, i.e., there is a positive real number $\alpha>0$ such that for all $v\in\mathbb{R}^d$ and a.e. $x\in\mathbb{R}^d$, we have \begin{align*}
	\langle A(x)v,v\rangle\geq \alpha ||v||^2.
	\end{align*} Therefore, the matrix $B$ such that $B(\Lambda x)=A(x)$ is coercive on $\Lambda \mathbb{R}^d$.
\end{enumerate}

\begin{remark}\leavevmode
	\begin{enumerate}
		\item The assumption of smoothness on the entries of $A$ is not essential. The approach of this chapter demands taking trace of solutions on lower dimensional manifolds. We only require as much smoothness as would guarantee twice continuous differentiability of the solutions.
		\item The assumption~\eqref{irrationalslope} implies that the continuous and periodic matrix $B$ is uniquely determined from its values on $\Lambda \mathbb{R}^d$. Hence, coercivity of $B$ on $\mathbb{R}^M$ follows from that of $A$. See~\cite{Shubin78} for details.
	\end{enumerate}
\end{remark}

The class of quasiperiodic functions is a subclass of almost periodic functions. For $K=\mathbb{R}\mbox{ or }\mathbb{C}$, let $\text{Trig}(\mathbb{R}^d;K)$ denote the set of all $K$-valued trigonometric polynomials. Recall that the completion of $\text{Trig}(\mathbb{R}^d;K)$ in norm of uniform convergence results in a Banach space called the space of all Bohr almost periodic functions denoted as $AP(\mathbb{R}^d)$. Further, in $L^p_\text{loc}(\mathbb{R}^d)$, one can define a seminorm \begin{align*}
||f||_{B^p}\coloneqq \left(\limsup_{R\to\infty}\frac{1}{R^d}\int_{\left[-\frac{R}{2},\frac{R}{2}\right)^d}|f(y)|^p\,dy\right)^{1/p}.
\end{align*} For $1\leq p<\infty$, the completion of $\text{Trig}(\mathbb{R}^d;K)$ in this seminorm results in the Besicovitch space of almost periodic functions $B^p(\mathbb{R}^d)$. Given a Besicovitch almost periodic function $g$, one can define the notion of mean value \begin{align*}
\mathcal{M}(g)\coloneqq \lim_{R\to\infty}\frac{1}{R^d}\int_{\left[-\frac{R}{2},\frac{R}{2}\right)^d}g(y)\,dy. 
\end{align*} For each $g\in B^p(\mathbb{R}^d)$, we can associate a formal Fourier series $\displaystyle g\sim \sum_{\xi\in\mathbb{R}^d}\widehat{g}(\xi)e^{ix\cdot\xi}$, whose exponents are those vectors $\xi\in\mathbb{R}^d$ such that $\mathcal{M}(g\cdot\exp(ix\cdot\xi))\neq 0$. These exponents or frequencies are denoted by $\exp(g)$ and the $\mathbb{Z}$-module generated by $\exp(g)$ is called as the frequency module of $g$ and denoted by $Mod(g)$. A {\it quasiperiodic} function may also be defined as an almost periodic function whose frequency module is finitely generated (See~\ref{coincide} in Remark~\ref{Remark1}). Trigonometric polynomials are the most common example of quasiperiodic functions. One may conclude from this definition that any quasiperiodic function may be lifted through a {\it winding matrix} $\Lambda$ to a periodic function on a higher dimensional torus. The space of all periodic $L^2$ functions in the higher dimension will be denoted interchangeably by $L^2_\sharp(Q)$ or $L^2(\mathbb{T}^M)$. The space $L^2_\sharp(Q)$ is also defined as the closure of $C^\infty_\sharp(Q)$ functions in $L^2(Q)$ norm. Similarly, for $s\in\mathbb{R}$, we may define $H^s_\sharp(Q)$ or $H^s(\mathbb{T}^M)$ as the space of all periodic distributions for which the norm $||u||_{H^s}=\left(\sum_{n\in\mathbb{Z}^M}(1+|n|^2)^s|\widehat{u}(n)|^2 \right)^{1/2}$ is finite.

\begin{remark}\label{Remark1}\leavevmode
	\begin{enumerate}
		\item The assumption~\eqref{irrationalslope} makes sure that the mean value of the quasiperiodic matrix $A$ can be written as the mean value of the periodic matrix $B$ on $Q$. A proof of this fact may be found in~\cite{Shubin78}. The equality of the two mean values is used in Section~\ref{characterization} for the characterization of homogenized tensor of quasiperiodic media.
		\item\label{coincide} We have given two seemingly disparate definitions of quasiperiodic functions, one as restriction of periodic functions to lower dimensional planes and second through the frequency module. Indeed, the two definitions are equivalent and the proof may be found in~\cite{Blot2001} for different classes of almost periodic functions. Let $\Gamma\subseteq\mathbb{R}^d$ be a finitely generated $\mathbb{Z}$-module. Denote by $B^2_\Gamma(\mathbb{R}^d)$ (respectively $AP_\Gamma(\mathbb{R}^d)$) the subspace of $B^2(\mathbb{R}^d)$ (respectively $AP(\mathbb{R}^d)$) containing functions whose frequencies belong to $\Gamma$. Then, $B^2_\Gamma(\mathbb{R}^d)$ (respectively $AP_\Gamma(\mathbb{R}^d)$) is isometrically isomorphic to $L^2(\mathbb{T}^N)$ (respectively $C(\mathbb{T}^N)$) for some $N>d$. 
		\item A simple example of a quasiperiodic function is $g(x)=\sin(x)+\sin(\sqrt{2}x)$ which admits a periodic embedding of the form $\tilde{g}(x,y)=\sin(x)+\sin(y)$ with $\Lambda=(1\quad\sqrt{2})^T$. One may wonder how to obtain a common matrix $\Lambda$ for a collection of functions such as in the case of the entries of a quasiperiodic matrix. This is not too difficult either, as illustrated in the following example. Consider the quasiperiodic matrix 
		\begin{align*}A=
		\begin{pmatrix}
		\sin(x)+\sin(\sqrt{2}x) & \cos(\sqrt{2}x) \\ 
		\cos(\sqrt{2}x) & \cos(\sqrt{3}x)
		\end{pmatrix}, x\in\mathbb{R}.
		\end{align*} The matrix $A$ admits the following periodic embedding.
		\begin{align*}B=
		\begin{pmatrix}
		\sin(x)+\sin(y) & \cos(y) \\ 
		\cos(y) & \cos(z)
		\end{pmatrix}, (x,y,z)\in\mathbb{R}^3,
		\end{align*} with $\Lambda=(1\quad\sqrt{2}\quad\sqrt{3})^T$.
		\item The assumption~\eqref{irrationalslope} is a qualitative version of Kozlov's small divisors condition which we recall below.	
		\begin{definition}\label{Kozlov}
			A quasiperiodic function $f:\mathbb{R}^d\to\mathbb{R}$ is said to satisfy the {\bf Kozlov condition} if 
			
			(1) there exist a function $F:\mathbb{R}^M\to\mathbb{R}$ and $M(=m_1+m_2+\ldots+m_d)$ numbers
			
			 $\beta_1^1,\beta_1^2,\ldots,\beta_1^{m_1},\beta_2^{1},\ldots,\beta_2^{m_2},\ldots,\beta_d^1,\ldots,\beta_d^{m_d}\in\mathbb{R}$ such that $$f(x)=F\left(\beta_1^1x_1,\beta_1^2x_1,\ldots,\beta_1^{m_1}x_1,\beta_2^{1}x_2,\ldots,\beta_2^{m_2}x_2,\ldots,\beta_d^1x_d,\ldots,\beta_d^{m_d}x_d\right).$$
			
			(2) For each $1\leq i\leq d$, $\beta_i\coloneqq (\beta_i^1,\beta_i^2,\ldots,\beta_i^{m_i})$ is linearly independent over $\mathbb{Z}$.
			
			(3) There exist $C>0$ and $\tau>0$ such that for each $i=1,2,\ldots,d$ such that $m_i\geq2$, \begin{align}
			|n\cdot\beta_i|\geq \frac{C}{|n|^\tau}, \mbox{ for all } n\in\mathbb{Z}^{m_i}\setminus\{0\}.
			\end{align}
		\end{definition}
	\end{enumerate}
\end{remark}

A standard method to solve equations with quasiperiodic coefficients is to propose and solve an equation in higher dimensions whose solutions when suitably restricted to $\mathbb{R}^d$ solve the original equation~\cite{Kozlov78,GloriaHabibi2016,Blanc2015,Wellander2018}. Such a procedure necessitates an assumption on coefficients to be at least continuous since restriction of functions to lower dimensional surfaces requires some smoothness. A second difficulty results from the fact that the equation posed in the higher dimension is typically degenerate or non-elliptic. In order to define a suitable notion of Bloch waves, we regularize the degenerate equation in higher dimension. The homogenized tensor for quasiperiodic media is found to be equal to the limit of the Hessian of first Bloch eigenvalue of the regularized degenerate operator as the regularization parameter tends to zero. Further, we define a notion of quasiperiodic Bloch transform to aid us in the passage to the homogenization limit.

We note here that the study of almost periodic homogenization was initiated by Kozlov~\cite{Kozlov78} who also obtained a rate of convergence for quasiperiodic media satisfying a small divisors condition called the Kozlov condition. A widely known example of quasiperiodic media is quasicrystals~\cite{Schechtman84}. Quasicrystals are ordered structures without periodicity. They may be thought of as periodic crystals in higher dimensions that are projected to lower dimensions through a ``cut and project'' procedure. Quasicrystals have unique thermal and electrical conductivity properties with many potential industrial and household applications, such as adhesion and friction resistant agents, composite materials~\cite{dubois2012properties}. The mathematical structure of quasicrystals had already been anticipated in the works of Bohr~\cite{Bohr47}, Besicovitch~\cite{Besicovitch55}, and Meyer~\cite{Meyer1995}.

The plan of the chapter is as follows: In Section~\ref{degenerate}, we introduce the degenerate periodic equation in $\mathbb{R}^M$ and its regularized version for which we obtain approximate Bloch waves. In Section~\ref{Blochwaves}, we prove the existence of the regularized Bloch waves. In Section~\ref{perturbation}, we apply Kato-Rellich theorem to obtain analytic branch of the regularized Bloch waves and Bloch eigenvalue. In Section~\ref{cellproblemquasiperiodic}, we recall the cell problem for almost periodic media and the cell problem for the degenerate periodic operator in higher dimensions. In Section~\ref{characterization}, we obtain the homogenized tensor for the quasiperiodic media as a limit of the first regularized Bloch eigenvalue. In Section~\ref{quasibloch}, we introduce a notion of quasiperiodic Bloch transform. Finally, in Section~\ref{homotheorem}, we obtain the homogenization theorem for quasiperiodic media by using the quasiperiodic Bloch transform.

\section{Degenerate operator in $\mathbb{R}^M$}\label{degenerate} The Bloch wave method in homogenization is a spectral method. Bloch waves are solutions to the Bloch spectral problem which is a parametrized eigenvalue problem. While the details of Bloch wave method can be found in~\cite{Conca1997}, the main feature of this method is the existence of a ``ground state'' for the periodic operator, which is facilitated by the direct integral decomposition of the periodic operator. In the case of a quasiperiodic operator, one may not have a ground state but we show the existence of an approximate ground state. To begin with, we shall pose a Bloch spectral problem for the quasiperiodic operator. Let $\displaystyle Y^{'}\coloneqq \left[-\frac{1}{2},\frac{1}{2} \right)^d$, then we seek quasiperiodic solutions to the following Bloch spectral problem for the quasiperiodic operator $\mathcal{A}=-\nabla\cdot(A\nabla)$ 
\begin{align}\label{quasiBloch}
-(\nabla+i\eta)\cdot A(\nabla+i\eta)\phi=\lambda\phi\mbox{ in }\mathbb{R}^d.
\end{align}

The problem above is typically solved for periodic $A$, in which case, the solutions are called Bloch waves. However, the matrix $A$ is quasiperiodic, and it is not clear whether quasiperiodic solutions to~\eqref{quasiBloch} exist. Therefore, we propose to lift the operator $\mathcal{A}$ to a periodic operator in $\mathbb{R}^M$, for which a functional analytic formalism is available. The mapping $x\mapsto \Lambda x\in\mathbb{R}^M$ lifts the operator $\mathcal{A}$ to the periodic but degenerate operator in $\mathbb{R}^M$ given by
\begin{align}
\mathcal{C}\coloneqq -\Lambda^T\nabla_y\cdot B\Lambda^T\nabla_y.
\end{align}
Let us denote $\Lambda^T\nabla_y$ by $D$, then operator $\mathcal{C}$ is written as $-D\cdot BD$. The operator $\mathcal{C}$ may also be written as $-\nabla_y\cdot C\nabla_y$ where the matrix $C=\Lambda B\Lambda^T$. Note that $C$ is non-coercive. 

The Bloch eigenvalue problem given by~\eqref{quasiBloch} is lifted to the following problem: For $\eta\in Y^{'}$, find $\phi(\eta)\in H^1_\sharp(Q)$ such that
\begin{align}\label{Blochproblem}
\mathcal{C}(\eta)\phi(\eta)\coloneqq -(D+i\eta)\cdot B(D+i\eta)\phi(\eta)=\lambda(\eta)\phi(\eta).
\end{align} We note here that due to the degeneracy of operator $\mathcal{C}(\eta)$, we cannot seek Lax-Milgram solutions to this equation in $H^1_\sharp(Q)$. To remedy this situation, inspired by~\cite{Blanc2015}, we regularize~\eqref{Blochproblem} as follows. For $\eta\in Y^{'}$ and $0<\delta<1$, find $\phi^\delta(\eta)\in H^1_\sharp(Q)$ such that
\begin{align}\label{deltaBlochproblem}
\mathcal{C}^\delta(\eta)\phi^\delta(\eta)\coloneqq -(D+i\eta)\cdot B(D+i\eta)\phi^\delta(\eta)+\delta\Delta\phi^\delta(\eta)=\lambda^\delta(\eta)\phi^\delta(\eta).
\end{align}

The solutions $\phi^\delta$ to~\eqref{deltaBlochproblem} shall be called regularized Bloch waves and $\lambda^\delta$ will be called regularized Bloch eigenvalues.

\begin{remark}
	\leavevmode
	\begin{enumerate}
		\item In homogenization, one often assumes the basic periodicity cell to be rectangular for convenience. However, more general periodicity cells in the shape of a parallelopiped may be considered through a change of coordinates. Under the change of coordinates, the rectangular cell becomes a parallelopiped and an operator of the form $-\nabla\cdot A\nabla$ becomes $-\nabla\cdot(PAP^{-1})\nabla$. In a similar fashion, the transformation $\Lambda$ converts the operator $-\nabla_x\cdot A\nabla_x$ into the operator $-\nabla_y\cdot(\Lambda B\Lambda^T)\nabla_y$. Unlike $PAP^{-1}$, the matrix $\Lambda B\Lambda^T$ is non-invertible since $\Lambda$ is a transformation between spaces of different dimensions.
		\item It is instructive to compare quasiperiodic structures with laminates. Quasiperiodic media admit embeddings in higher-dimensions which are periodic and non-homogeneous in all directions. On the other hand, laminated materials are periodic structures which are homogeneous in some directions. Further, the operator with quasiperiodic coefficients has a degenerate embedding in higher dimensions, viz., it is non-elliptic in certain directions. On the other hand, the operator modelling laminates are elliptic in all directions.
		\item The regularization may be thought of as the addition of complementary directions to the quasicrystal which is produced by ``cutting'' a periodic crystal in certain ``irrational'' directions and then projecting to lower dimensions.
		\item In contrast with~\eqref{deltaBlochproblem}, it is standard to take the quasimomentum parameter $\eta$ in $\mathbb{R}^M$ and to seek the regularized Bloch eigenvalues corresponding to the periodic operator given by $-\nabla_y\cdot(\Lambda B\Lambda^T+\delta I)\nabla_y$ where $I$ is the $M\times M$ identity matrix. However, we have chosen the quasimomentum parameter $\eta$ in $\mathbb{R}^d$ and we have not introduced a shift in the regularized term $\delta\Delta$. This simplifies the presentation considerably.
	\end{enumerate}
\end{remark}
	
\section{Regularized Bloch waves}\label{Blochwaves}
In what follows, we shall prove that 
	\begin{enumerate}
		\item There exists $C_*$ such that for all $\displaystyle\eta\in Y^{'}$, the bilinear form generated by the operator $\mathcal{C}^\delta(\eta)+C_*I$ is elliptic on $H^1_\sharp(Q)$ where $I$ denotes the identity operator on $L^2_\sharp(Q)$. This will allow us to prove invertibility of $\mathcal{C}^\delta(\eta)+C_*I$. 
		\item By Rellich compactness theorem, we will prove compactness of the inverse of $\mathcal{C}^\delta(\eta)+C_*I$ in $L^2_\sharp(Q)$. This will prove the existence of regularized Bloch eigenvalues and Bloch eigenfunctions.
		\item An application of the perturbation theory will provide us with smoothness of regularized Bloch eigenvalues and Bloch waves with respect to $\eta$ near $\eta=0$. 
	\end{enumerate} 

For the bilinear form $a^\delta[\eta](\cdot,\cdot)$ defined on $H^1_\sharp(Q)\times H^1_\sharp(Q)$ by \begin{align}
	a^\delta[\eta](u,v)\coloneqq \int_Q B (D+i\eta)u\cdot\overline{(D+i\eta)v}\,dy+\delta\int_Q \nabla_y u\cdot\overline{\nabla_y v}\,dy,
\end{align} we have the following G\aa rding-type inequality whose proof is simple and is omitted.

\begin{lemma}\label{coercivityfortranslate}
There exist positive real numbers $C_*$ and $C^*$ not depending on $\delta$ and $\eta$ such that for all $u\in H^1_\sharp(Q)$ and all $\eta\in Y^{'}$, we have	
\begin{align}\label{coercivity}
a[\eta](u,u)+C_*||u||^2_{L^2_\sharp(Q)}\geq \delta||\nabla_y u||^2_{L^2_\sharp(Q)}+C^*||Du||_{L^2_\sharp(Q)}. 
\end{align}
\end{lemma}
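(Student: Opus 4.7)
The plan is to reduce everything to the coercivity of $B$ on $\mathbb{R}^M$ together with an application of Young's inequality to the cross terms generated by the twist $i\eta$. Hypothesis~\ref{H4} asserts coercivity of $B$ on $\Lambda\mathbb{R}^d$, and by continuity of $B$ together with the density consequence of~\eqref{irrationalslope} this extends to a uniform constant $\alpha>0$ with $\langle B(y)\xi,\xi\rangle\geq \alpha|\xi|^2$ for every $y\in\mathbb{R}^M$ and every $\xi\in\mathbb{R}^d$. Applied pointwise to the complex vector field $(D+i\eta)u$, and noting that the resulting quadratic form is real because $B$ is real and symmetric, this yields
\begin{equation*}
\int_Q B(D+i\eta)u\cdot\overline{(D+i\eta)u}\,dy \;\geq\; \alpha\int_Q |(D+i\eta)u|^2\,dy.
\end{equation*}

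Next I would expand the pointwise twisted gradient as
\begin{equation*}
|(D+i\eta)u|^2 \;=\; |Du|^2 + 2\,\eta\cdot\operatorname{Im}(\overline{u}\,Du) + |\eta|^2|u|^2,
\end{equation*}
and absorb the cross term with Young's inequality at parameter $\tfrac12$, which gives the pointwise bound $|(D+i\eta)u|^2 \geq \tfrac{1}{2}|Du|^2 - |\eta|^2|u|^2$. Integrating this and adding the non-negative regularization term $\delta\|\nabla_y u\|^2_{L^2_\sharp(Q)}$ already present in $a^\delta[\eta]$ produces
\begin{equation*}
a^\delta[\eta](u,u) \;\geq\; \tfrac{\alpha}{2}\,\|Du\|^2_{L^2_\sharp(Q)} \;-\; \alpha|\eta|^2\,\|u\|^2_{L^2_\sharp(Q)} \;+\; \delta\,\|\nabla_y u\|^2_{L^2_\sharp(Q)}.
\end{equation*}

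Since $\eta\in Y'=[-\tfrac12,\tfrac12)^d$ forces $|\eta|^2\leq d/4$, the choice $C_*:=\alpha d/4$ and $C^*:=\alpha/2$ delivers the stated inequality, with both constants manifestly independent of $\delta$ and $\eta$. The $\delta$-independence is automatic because the regularization term sits on the favourable side of the inequality and is never used for absorption; the $\eta$-independence comes from the uniform bound on $Y'$. I do not anticipate a serious obstacle — this is a textbook G\aa rding-type estimate for a first-order perturbation of a (degenerate) elliptic operator; the only small point to track is that the coercivity of $B$ must be used on all of $\mathbb{R}^M$, not merely on $\Lambda\mathbb{R}^d$, which is exactly what the discussion following~\ref{H4} provides.
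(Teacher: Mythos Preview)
Your argument is correct and is precisely the kind of direct G\aa rding estimate the paper has in mind when it declares the proof ``simple and omitted.'' The only point worth flagging is cosmetic: the statement in the paper writes $C^*\|Du\|_{L^2_\sharp(Q)}$ rather than $C^*\|Du\|^2_{L^2_\sharp(Q)}$, which is evidently a typo there --- your proof delivers the squared norm, as it should.
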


The above lemma shows that for every $\eta\in Y^{'}$ the operator $\mathcal{C}^\delta(\eta)+C_*I$ is elliptic on $H^1_\sharp(Q)$. Hence, for $f\in L^2_\sharp(Q)$, this shows that $\mathcal{C}^\delta(\eta)u+C_*u=f$ is solvable and the solution is in $H^1_\sharp(Q)$. As a result, the solution operator $S(\eta)$ is continuous from $L^2_\sharp(Q)$ to $H^1_\sharp(Q)$. Since the space $H^1_\sharp(Q)$ is compactly embedded in $L^2_\sharp(Q)$, $S(\eta)$ is a self-adjoint compact operator on $L^2_\sharp(Q)$. Therefore, by an application of the spectral theorem for self-adjoint compact operators, for every $\eta\in Y^{'}$ we obtain an increasing sequence of eigenvalues of $\mathcal{C}^\delta(\eta)+C_*I$ and the corresponding eigenfunctions form an orthonormal basis of $L^2_\sharp(Q)$. However, note that both the operators $\mathcal{C}^\delta(\eta)$ and $\mathcal{C}^\delta(\eta)+C_*I$ have the same eigenfunctions but each eigenvalue of the two operators differ by $C_*$. We shall denote the eigenvalues and eigenfunctions of the operator $\mathcal{C}^\delta(\eta)$ by $\eta\to(\lambda^\delta_m(\eta),\phi^\delta_m(\cdot,\eta))$. Note that due to the regularity of the coefficients, the eigenfunctions are $C^\infty$ functions of $y\in Q$. All of these developments are recorded in the theorem below.

\begin{theorem}\label{smoothnessofeigenfunc}
	The regularized Bloch eigenvalue problem~\eqref{deltaBlochproblem} admits a countable sequence of eigenvalues and corresponding eigenfunctions in the space $H^1_\sharp(Q)$. Further, the eigenfunctions $\phi_m(y,\eta)$ are $C^\infty$ functions of $y\in Q$.
\end{theorem}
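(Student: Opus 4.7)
The plan is to convert the spectral problem for $\mathcal{C}^\delta(\eta)$ into a compact self-adjoint operator problem, apply the spectral theorem, and then invoke elliptic regularity to upgrade the eigenfunctions to $C^\infty$. The Gårding-type estimate of Lemma~\ref{coercivityfortranslate} is the key input.

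First, I would fix $\eta\in Y'$ and observe that Lemma~\ref{coercivityfortranslate} says the sesquilinear form $a^\delta[\eta](u,v)+C_*(u,v)_{L^2_\sharp(Q)}$ is bounded and coercive on $H^1_\sharp(Q)$, uniformly in $\eta\in Y'$ and $\delta\in(0,1)$, with coercivity constant at least $\min(\delta,C^*)>0$. The boundedness is immediate from $B\in L^\infty$ and the Cauchy--Schwarz inequality. The Lax--Milgram lemma then produces, for every $f\in L^2_\sharp(Q)$, a unique weak solution $u\in H^1_\sharp(Q)$ of $(\mathcal{C}^\delta(\eta)+C_*I)u=f$, and defines a continuous solution operator $S(\eta)\colon L^2_\sharp(Q)\to H^1_\sharp(Q)$.

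Next, I would verify that $S(\eta)$, viewed as an operator on $L^2_\sharp(Q)$, is compact and self-adjoint. Compactness follows by composing $S(\eta)$ with the compact embedding $H^1_\sharp(Q)\hookrightarrow L^2_\sharp(Q)$ (Rellich--Kondrachov on the torus $\mathbb{T}^M$). Self-adjointness of $S(\eta)$ follows from the hermitian symmetry of $a^\delta[\eta](\cdot,\cdot)$: the form equals its complex conjugate transpose because $B$ and the identity are real symmetric matrices, and the complex gradient shift $(D+i\eta)$ contributes its conjugate on the test function side. Applying the spectral theorem for compact self-adjoint operators to $S(\eta)$ yields a decreasing sequence of real eigenvalues accumulating only at $0$, whose reciprocals furnish an increasing sequence of eigenvalues of $\mathcal{C}^\delta(\eta)+C_*I$ with a complete $L^2_\sharp(Q)$-orthonormal basis of eigenfunctions; subtracting $C_*$ gives the claimed eigenvalues $\lambda^\delta_m(\eta)$ and eigenfunctions $\phi^\delta_m(\cdot,\eta)\in H^1_\sharp(Q)$ of $\mathcal{C}^\delta(\eta)$.

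Finally, for the $C^\infty$ regularity in $y$, the crucial observation is that although $\mathcal{C}(\eta)$ is degenerate, the regularized operator $\mathcal{C}^\delta(\eta)$ has principal symbol $\xi\mapsto \langle B\Lambda^T\xi,\Lambda^T\xi\rangle+\delta|\xi|^2$, which is bounded below by $\delta|\xi|^2$; hence $\mathcal{C}^\delta(\eta)$ is uniformly elliptic on $\mathbb{T}^M$ with $C^\infty$ coefficients (by assumption~\ref{H1} lifted to $B$ via~\ref{H2}). Applying the standard interior elliptic regularity bootstrap on the torus to the equation $\mathcal{C}^\delta(\eta)\phi^\delta_m=\lambda^\delta_m(\eta)\phi^\delta_m$, one obtains $\phi^\delta_m(\cdot,\eta)\in H^k_\sharp(Q)$ for every $k$, and Sobolev embedding on $\mathbb{T}^M$ gives $\phi^\delta_m(\cdot,\eta)\in C^\infty(\mathbb{T}^M)$. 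The only subtlety worth checking carefully is this regularity step: one must be sure that the ellipticity is uniform (not degenerating in the direction $\Lambda^T\xi=0$), which is exactly what the regularizing term $\delta\Delta$ provides; this is the one place where the $\delta>0$ hypothesis is essential, and it is the main conceptual reason for introducing the regularization.
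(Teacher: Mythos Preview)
Your proposal is correct and follows essentially the same approach as the paper: the paper uses Lemma~\ref{coercivityfortranslate} to invoke Lax--Milgram for $\mathcal{C}^\delta(\eta)+C_*I$, appeals to Rellich compactness to obtain a compact self-adjoint inverse, applies the spectral theorem, and then cites standard elliptic regularity for the $C^\infty$ smoothness. Your write-up is in fact more detailed than the paper's on the regularity step, making explicit that the principal symbol is bounded below by $\delta|\xi|^2$ and hence that the regularization term $\delta\Delta$ is precisely what renders the operator uniformly elliptic on $\mathbb{T}^M$; this is exactly the point the paper leaves implicit.
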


\begin{proof}
  We have already proved the existence of the eigenvalues and eigenfunctions for the problem~\eqref{deltaBlochproblem}. Regularity of the eigenfunctions follows from the standard elliptic regularity theory~\cite{Ladyzhenskaya68}.
\end{proof}

\begin{remark}
	In~\ref{H1}, we assume the coefficient matrix $A$ to be smooth. However, we do not require this much regularity. We only require as much smoothness on the coefficient matrix that would ensure that the Bloch eigenfunctions are twice continuously differentiable.
\end{remark}

\section{Regularity of the ground state}\label{perturbation} In the sequel, differentiability properties of regularized Bloch eigenvalues and regularized Bloch eigenfunctions with respect to the dual parameter $\eta\in Y^{'}$ are required. For this purpose, we have Kato-Rellich theorem~\cite{Kato1995} which guarantees analyticity of parametrized eigenvalues and eigenfunctions corresponding to analytic family of operators near a point at which the eigenvalue is simple. Indeed, we will prove the following theorem.
 
\begin{theorem}\label{analytic}
	There is a small ball $U^\delta \coloneqq B_{\theta_{\delta}}(0) \coloneqq\{\eta\in Y^{'}:|\eta|<\theta_{\delta}\}$ such that
	\begin{enumerate}
		\item The first regularized Bloch eigenvalue $\eta\to \lambda^\delta_1(\eta)$ is analytic for $\eta\in U^\delta$.
		\item There is a choice of corresponding eigenfunctions $\phi^\delta_1(\cdot,\eta)$ such that $\eta\in U^\delta\to \phi_1^\delta(\cdot,\eta)\in H^1_\sharp(Q)$ is analytic.
	\end{enumerate}
\end{theorem}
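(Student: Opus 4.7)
\emph{Proof plan.} The plan is to cast $\{\mathcal{C}^\delta(\eta)\}_\eta$ as a self-adjoint analytic family of type (A) on $L^2_\sharp(Q)$ near $\eta=0$, verify that $\lambda_1^\delta(0)$ is a simple isolated eigenvalue, and then invoke the Kato--Rellich analytic perturbation theorem to obtain the two analyticity statements simultaneously. I would first expand
\[
\mathcal{C}^\delta(\eta) = \mathcal{C}^\delta(0) + L_1(\eta) + L_2(\eta),
\]
where $L_1(\eta) := -iD\cdot(B\eta) - i\eta\cdot BD$ is first order in $y$-derivatives and linear in $\eta$, and $L_2(\eta) := \eta\cdot B\eta$ is a bounded multiplication operator depending polynomially on $\eta$. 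Because the regularising term built into $\mathcal{C}^\delta(0)$ makes that operator uniformly elliptic on $\mathbb{T}^M$, standard elliptic theory gives the common domain $D(\mathcal{C}^\delta(\eta)) = H^2_\sharp(Q)$ for all $\eta$. The interpolation inequality $\|\nabla_y u\|_{L^2}^2 \leq \varepsilon\|\Delta u\|_{L^2}^2 + C_\varepsilon\|u\|_{L^2}^2$ then shows that $L_1(\eta)$ is $\mathcal{C}^\delta(0)$-bounded with relative bound zero, and $L_2(\eta)$ is plainly bounded. Polynomial dependence on $\eta$ promotes this to an analytic family of type (A) in a complex neighbourhood of $\eta=0\in\mathbb{C}^d$, and self-adjointness for real $\eta$ follows from symmetry of $B$ in~\ref{H3}.

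Next I would check simplicity and isolation of the base eigenvalue. Using the quadratic form from Lemma~\ref{coercivityfortranslate} at $\eta=0$, the relation $\mathcal{C}^\delta(0)u = 0$ yields
\[
\int_Q BDu\cdot\overline{Du}\,dy + \delta\int_Q |\nabla_y u|^2\,dy = 0.
\]
Since $\delta>0$ this forces $\nabla_y u\equiv 0$, so $u$ is constant on $Q$. Hence $\lambda_1^\delta(0)=0$ is simple with eigenspace $\mathrm{span}\{\mathbf{1}\}$. Theorem~\ref{smoothnessofeigenfunc} supplies discreteness of the spectrum, so $0$ is isolated from the remainder of the spectrum by some gap $2g_\delta>0$.

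With these two inputs in place, the Kato--Rellich theorem produces a radius $\theta_\delta>0$ such that for $\eta\in U^\delta = B_{\theta_\delta}(0)$ the Riesz spectral projection
\[
P^\delta(\eta) = \frac{1}{2\pi i}\oint_{|z|=g_\delta}\bigl(z - \mathcal{C}^\delta(\eta)\bigr)^{-1}\,dz
\]
is a rank-one projector depending analytically on $\eta$. Defining $\phi_1^\delta(\cdot,\eta) := P^\delta(\eta)\mathbf{1}/\|P^\delta(\eta)\mathbf{1}\|_{L^2_\sharp(Q)}$ yields an analytic branch of normalised eigenfunctions into $H^1_\sharp(Q)$ (in fact into $H^2_\sharp(Q)$), and the corresponding eigenvalue $\lambda_1^\delta(\eta) = a^\delta[\eta](\phi_1^\delta,\phi_1^\delta)$ inherits analyticity by composition.

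The main obstacle is establishing the type (A) structure of $\{\mathcal{C}^\delta(\eta)\}$, which is exactly where the regularisation pays off: without the $\delta$-term the formal operator $-D\cdot BD$ is degenerate, its domain is anisotropic, and the first-order perturbation $L_1(\eta)$ fails to be relatively bounded by the unperturbed operator. Both the gap $g_\delta$ and the radius $\theta_\delta$ depend on $\delta$ and are expected to shrink as $\delta\to 0$; controlling this degeneration is not required here but will be central to subsequent sections.
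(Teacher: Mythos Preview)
Your proposal is correct and reaches the same conclusion via Kato--Rellich, but the route differs from the paper's. You verify the hypotheses in the \emph{type (A)} framework: you identify the common operator domain $H^2_\sharp(Q)$ via elliptic regularity for $\mathcal{C}^\delta(0)=-\nabla_y\cdot(\Lambda B\Lambda^T+\delta I)\nabla_y$, then show the first-order perturbation $L_1(\eta)$ is $\mathcal{C}^\delta(0)$-bounded with relative bound zero and $L_2(\eta)$ is bounded, so the family is analytic of type (A). The paper instead works in the \emph{type (B)} framework: it complexifies the sesquilinear form $a^\delta[\eta]$, and checks directly that the resulting form $t(\tilde\eta)$ is sectorial, closed, and holomorphic on $H^1_\sharp(Q)$. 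Your approach is arguably more transparent once one has elliptic regularity in hand, and it yields the eigenfunction branch in $H^2_\sharp(Q)$ for free; the paper's form-based approach stays at the $H^1_\sharp$ level throughout and avoids invoking the precise operator domain, which can be an advantage in lower-regularity settings. The paper also separately checks algebraic simplicity of the zero eigenvalue, which in your self-adjoint setting is automatic from geometric simplicity, so your omission there is harmless. One minor caveat: your normalisation $P^\delta(\eta)\mathbf{1}/\|P^\delta(\eta)\mathbf{1}\|_{L^2}$ is real-analytic in $\eta$ but not holomorphic in the complexified parameter (the norm is not holomorphic); since the theorem only asserts analyticity for real $\eta\in U^\delta$ this is fine, but if one wanted a genuinely holomorphic branch one would take the unnormalised $P^\delta(\eta)\mathbf{1}$.
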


The proof will require the Kato-Rellich theorem which we will state below for completeness. The theorem as stated in~\cite{Reed1978} is for a single parameter, however the theorem is also true for multiple parameters with the assumption of simplicity (See Supplement of~\cite{Baumgartel1985}).

\begin{theorem}{(Kato-Rellich)}
	Let $D(\tilde{\eta})$ be a self-adjoint holomorphic family of type (B) defined for $\tilde{\eta}$ in an open set in $\mathbb{C}^M$. Further let $\lambda_0=0$ be an isolated eigenvalue of $D(0)$ that is algebraically simple.  Then there exists a neighborhood $R_0\subseteq \mathbb{C}^M$ containing $0$ such that for $\tilde{\eta}\in R_0$, the following holds:
	\begin{enumerate}
		\item There is exactly one point $\lambda(\tilde{\eta})$ of $\sigma(D(\tilde{\eta}))$ near $\lambda_0=0$. Also, $\lambda(\tilde{\eta})$ is isolated and algebraically simple. Moreover, $\lambda(\tilde{\eta})$ is an analytic function of $\tilde{\eta}$.
		\item There is an associated eigenfunction $\phi(\tilde{\eta})$ depending analytically on $\tilde{\eta}$ with values in $H^1_\sharp(Q)$.
	\end{enumerate}
\end{theorem}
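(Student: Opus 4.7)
The plan is to verify the three hypotheses of the Kato--Rellich theorem just stated for the family $\tilde\eta\mapsto\mathcal{C}^\delta(\tilde\eta)$ at $\tilde\eta=0$. Specifically, I would (i) complexify the parameter $\eta$ and show that $\mathcal{C}^\delta(\tilde\eta)$ is a self-adjoint holomorphic family of type (B) on an open neighborhood of $0\in\mathbb{C}^d$, (ii) check that $\lambda^\delta_1(0)=0$ is an isolated, algebraically simple eigenvalue of $\mathcal{C}^\delta(0)$, and then (iii) invoke the theorem and restrict to a real ball to extract $U^\delta$.

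For (i), I would expand the sesquilinear form $a^\delta[\eta]$ from Lemma~\ref{coercivityfortranslate} as a polynomial in $\eta$,
\[
  a^\delta[\eta](u,v)=a^\delta[0](u,v)+\sum_{k}\eta_k\,\mathcal{L}_k(u,v)+\sum_{k,\ell}\eta_k\eta_\ell\,\mathcal{M}_{k\ell}(u,v),
\]
where the $\mathcal{L}_k$ are bounded sesquilinear forms on $H^1_\sharp(Q)\times H^1_\sharp(Q)$ and the $\mathcal{M}_{k\ell}$ are bounded on $L^2_\sharp(Q)\times L^2_\sharp(Q)$. Replacing $\eta$ by $\tilde\eta\in\mathbb{C}^d$ gives a sesquilinear form with constant form domain $H^1_\sharp(Q)$ depending polynomially, hence holomorphically, on $\tilde\eta$. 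For $|\tilde\eta|$ sufficiently small, the extra contributions produced by $\operatorname{Im}\tilde\eta$ can be absorbed into $\delta\|\nabla u\|^2_{L^2_\sharp(Q)}$ via Cauchy--Schwarz and Young's inequality, yielding a sectoriality estimate of the form $\operatorname{Re} a^\delta[\tilde\eta](u,u)+C'_*\|u\|^2_{L^2_\sharp(Q)}\geq c\,\delta\|\nabla u\|^2_{L^2_\sharp(Q)}$ with a fixed bound on $|\operatorname{Im} a^\delta[\tilde\eta](u,u)|$; this is precisely the definition of a self-adjoint holomorphic family of type (B) near $0$.

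For (ii), note that $a^\delta[0](u,u)=\int_Q BDu\cdot\overline{Du}\,dy+\delta\int_Q|\nabla u|^2\,dy\geq 0$ and that the constant function lies in the kernel, so $\lambda^\delta_1(0)=0$. If $u\in H^1_\sharp(Q)$ satisfies $a^\delta[0](u,u)=0$, then both non-negative contributions vanish and the $\delta$-term forces $\nabla u\equiv 0$, so $u$ is constant; the ground eigenspace is one-dimensional. By Section~\ref{Blochwaves}, the resolvent of $\mathcal{C}^\delta(0)+C_*I$ is compact, so the spectrum of $\mathcal{C}^\delta(0)$ is discrete and $0$ is isolated; self-adjointness then promotes geometric simplicity to algebraic simplicity. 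The Kato--Rellich theorem now produces a complex neighborhood $R_0\subseteq\mathbb{C}^d$ of $0$ on which $\tilde\eta\mapsto\lambda^\delta_1(\tilde\eta)$ and $\tilde\eta\mapsto\phi^\delta_1(\cdot,\tilde\eta)\in H^1_\sharp(Q)$ are analytic, and $U^\delta=B_{\theta_\delta}(0)$ is obtained as any real ball in $R_0\cap Y'$.

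The main obstacle is step (i): the sectoriality/absorption estimate for complex $\tilde\eta$. In the classical periodic (non-degenerate) setting one has a uniform coercivity bound on $\|\nabla u\|^2$ directly from the coefficient matrix, and absorption of the cross terms is essentially free. Here, the term $\int_Q BDu\cdot\overline{Du}\,dy$ only controls the $d$ directional derivatives given by $D=\Lambda^T\nabla_y$, while the remaining $M-d$ transverse directions are supplied solely by the regularizer $\delta\|\nabla u\|^2$. Consequently, the admissible radius $\theta_\delta$ of the neighborhood is forced to depend on $\delta$, which is harmless for this theorem (in which $\delta$ is fixed) but will be the source of the delicate uniformity questions addressed when $\delta\to 0$ later in the paper.
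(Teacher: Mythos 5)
The statement you were asked to prove is the Kato--Rellich theorem itself, i.e.\ the abstract perturbation result for a self-adjoint holomorphic family of type (B) with an isolated, algebraically simple eigenvalue. Your proposal does not prove this statement: it verifies the hypotheses of that theorem for the concrete family $\mathcal{C}^\delta(\tilde\eta)$ (complexification, a sectoriality estimate in which the $\delta$-regularizer absorbs the transverse directions, compact resolvent, simplicity of the eigenvalue $0$) and then, in your own words, ``invokes the theorem.'' As a proof of the stated result this is circular --- you use the Kato--Rellich theorem as a black box to conclude the Kato--Rellich theorem. What you have actually written is, in substance, the paper's proof of Theorem~\ref{analytic} (the application to $\mathcal{C}^\delta$), which in the paper comes immediately after the quoted statement; the paper itself offers no proof of the abstract theorem but cites Reed--Simon for the one-parameter case and the supplement of Baumg\"artel for the multiparameter version, which is exactly how such a quoted classical result is meant to be handled.

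A genuine proof of the abstract statement would have to run the analytic perturbation machinery rather than verify hypotheses for one example: from the type (B) assumption one gets that the resolvent $(z-D(\tilde\eta))^{-1}$ is jointly analytic in $\tilde\eta$ and $z$ off the spectrum; since $\lambda_0=0$ is isolated one chooses a small circle $\Gamma$ separating it from the rest of $\sigma(D(0))$ and forms the Riesz projection $P(\tilde\eta)=\frac{1}{2\pi i}\oint_\Gamma (z-D(\tilde\eta))^{-1}\,dz$, which is analytic in $\tilde\eta$ near $0$; constancy of the rank of an analytic family of projections gives $\dim\operatorname{ran}P(\tilde\eta)=1$, so exactly one (algebraically simple, isolated) eigenvalue lies inside $\Gamma$, recovered analytically as the trace of $D(\tilde\eta)P(\tilde\eta)=\frac{1}{2\pi i}\oint_\Gamma z\,(z-D(\tilde\eta))^{-1}\,dz$, while $\phi(\tilde\eta)=P(\tilde\eta)\phi(0)$ furnishes the analytic eigenvector with values in the form domain. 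None of these steps appear in your proposal. (Your hypothesis verification itself is sound and closely parallels the paper's subsequent proof of Theorem~\ref{analytic}, up to minor points such as your complexifying in $\mathbb{C}^d$ where the paper writes the form $t(\tilde\eta)$ over $\mathbb{C}^M$, and your replacing the paper's compatibility-condition argument for algebraic simplicity by the self-adjointness argument at $\tilde\eta=0$, both of which are acceptable --- but it answers a different question than the one posed.)
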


In order to prove Theorem~\ref{analytic}, we need to complexify the shifted operator $\mathcal{C}^\delta(\eta)$ before verifying the hypothesis of Kato-Rellich Theorem. 

\begin{proof}{(Proof of Theorem~\ref{analytic})}
 \paragraph{(i) Complexification of $\mathcal{C}^\delta(\eta)$} The form $a[\eta](\cdot,\cdot)$ is associated with the operator $\mathcal{C}^\delta(\eta)$.  We define its complexification as 
 $$t(\tilde{\eta})=\int_Q B(D+i\sigma+\tau)u\cdot(D-i\sigma+\tau)\overline{u}\,dy+\delta\int_Q \nabla_y u\cdot\overline{\nabla_y u}\,dy$$ 
 for $\tilde{\eta}\in R$ where \begin{equation*}R\coloneqq\{\tilde{\eta}\in\mathbb{C}^M:\tilde{\eta}=\sigma+i\tau, \sigma,\tau\in\mathbb{R}^M, |\sigma|<1/2,|\tau|<1/2\}.\end{equation*}
\paragraph{(ii) the form $t(\tilde{\eta})$ is sectorial}
We have \begin{align*}
	t(\tilde{\eta})&=\int_Q B(D+i\sigma+\tau)u\cdot(D-i\sigma+\tau)\overline{u}\,dy +\delta\int_Q \nabla_y u\cdot\overline{\nabla_y u}\,dy\\
	&=\int_Q B(D+i\sigma) u\cdot(D-i\sigma)\overline{u}\,dy+\delta\int_Q \nabla_y u\cdot\overline{\nabla_y u}\,dy-\int_Q B(\tau u)\cdot D\overline{u}\,dy\\
	&\qquad + \int_Q B D u\cdot(\tau\overline{u})\, dy-\int_Q B\tau u\cdot\tau\overline{u}\,dy+i \int_Q B\sigma u\cdot\tau\overline{u}\,dy+i \int_Q B\tau u\cdot\sigma \overline{u}\,dy.
\end{align*} From above, it is easy to write separately the real and imaginary parts of the form $t(\tilde{\eta})$.
\begin{align}
	\Re t(\tilde{\eta})[u]=\int_Q B(D+i\sigma) u\cdot(D-i\sigma)\overline{u}\,dy +\delta\int_Q \nabla_y u\cdot\overline{\nabla_y u}\,dy -\int_Q B\tau u\cdot\tau\overline{u}\,dy,
\end{align}
\begin{align}
	\Im\,t(\tilde{\eta})[u]= \int_Q B\sigma u\cdot\tau\overline{u}\,dy + \int_Q B\tau u\cdot\sigma \overline{u}\,dy + \Im \int_Q B D u\cdot \tau \overline{u}\,dy.
\end{align} 
For the real part, we can readily obtain the following estimate:
\begin{align}\label{sectoriality2}
  \Re t(\tilde{\eta})[u] + C_5 ||u||^2_{L^2_\sharp(Q)}\geq \frac{\alpha}{2}\left(||u||^2_{L^2_\sharp(Q)}+||Du||^2_{L^2_\sharp(Q)}\right) + \delta||\nabla_y u||^2_{L^2_\sharp(Q)}.	
\end{align}
Let us define the new form $\tilde{t}(\tilde{\eta})$ by $\tilde{t}(\tilde{\eta})[u,v]=t(\tilde{\eta})[u,v]+(C_5+C_6)(u,v)_{L^2_\sharp(Q)}$, for which it holds that 
\begin{align*}
	\Re \tilde{t}(\tilde{\eta})[u]\geq \frac{\alpha}{2}\left(||u||^2_{L^2_\sharp(Q)}+||Du||^2_{L^2_\sharp(Q)}\right) +\delta||\nabla_y u||^2_{L^2_\sharp(Q)}+C_6||u||^2_{L^2_\sharp(Q)}.
\end{align*}
Also, the imaginary part of $\tilde{t}(\tilde{\eta})$ can be estimated as follows:
\begin{align*}
	\Im \tilde{t}(\tilde{\eta})[u]&\leq C_7 ||u||^2_{L^2_\sharp(Q)}+C_8 ||Du||^2_{L^2_\sharp(Q)}\\
	&\stackrel{C_7=C_6C_9,2C_8=\alpha C_9}{=}C_9\left(C_6||u||^2_{L^2_\sharp(Q)}+\frac{\alpha}{2}||Du||^2_{L^2_\sharp(Q)}\right)\\
	&\leq C_9\left(\Re \tilde{t}(\tilde{\eta})[u]-\frac{\alpha}{2}||u||^2_{L^2_\sharp(Q)}\right).
\end{align*} This shows that $\tilde{t}(\tilde{\eta})$ is sectorial. However, sectoriality is invariant under translations by scalar multiple of identity operator in $L^2_\sharp(Q)$, therefore the form $t(\tilde{\eta})$ is also sectorial.
\paragraph{(iii) The form $t(\tilde{\eta})$ is closed}
Suppose that $u_n\stackrel{t}{\to}u$. This means that $u_n\to u$ in $L^2_\sharp(Q)$ and $t(\tilde{\eta})[u_n-u_m]\to 0$. As a consequence, $\Re t(\tilde{\eta})[u_n-u_m]\to 0$. By~\eqref{sectoriality2}, $||u_n-u_m||_{H^1_\sharp(Q)}\to 0$, i.e., $(u_n)$ is Cauchy in $H^1_\sharp(Q)$. Therefore, there exists $v\in H^1_\sharp(Q)$ such that $u_n\to v$ in $H^1_\sharp(Q)$. Due to uniqueness of limit in $L^2_\sharp(Q)$, $v=u$. Therefore, the form is closed.
\paragraph{(iv) The form $t(\tilde{\eta})$ is holomorphic} The holomorphy of $t$ is an easy consequence of the fact that $t$ is a quadratic polynomial in $\eta$.
\paragraph{(v) $0$ is an isolated eigenvalue} Zero is an eigenvalue because constants belong to the kernel of $\mathcal{C}^\delta(0)=-\nabla_y\cdot(\Lambda B\Lambda^T+\delta I)\nabla_y$. We proved using Lemma~\ref{coercivityfortranslate} that $\mathcal{C}^\delta(0)+C_*I$ has compact resolvent. Also, $C_*$ is an eigenvalue of $\mathcal{C}^\delta(0)+C_*I$. Therefore, $C_*^{-1}$ is an eigenvalue of $(\mathcal{C}^\delta(0)+C_*I)^{-1}$ and $C_*^{-1}$ is isolated. Hence, zero is an isolated point of the spectrum of $\mathcal{C}^\delta(0)$.
\paragraph{(vi) $0$ is a geometrically simple eigenvalue} Denote by $ker\,\mathcal{C}^\delta(0)$ the nullspace of operator $\mathcal{C}^\delta(0)$. Let $v\in ker\,\mathcal{C}^\delta(0)$, then $\int_Q (\Lambda B\Lambda^T+\delta I)\nabla_y v\cdot\nabla_y v\,dy=0$. Due to the coercivity of the matrix $(\Lambda B\Lambda^T+\delta I)$, we obtain $||\nabla_y v||_{L^2_\sharp(Q)}=0$. Hence, $v$ is a constant. This shows that the eigenspace corresponding to eigenvalue $0$ is spanned by constants, therefore, it is one-dimensional.
\paragraph{(vii) $0$ is an algebraically simple eigenvalue} Suppose that $v\in H^1_\sharp(Q)$ such that $\mathcal{C}^\delta(0)^2v=0$, i.e., $\mathcal{C}^\delta(0)v\in ker\,\mathcal{C}^\delta(0)$. This implies that $\mathcal{C}^\delta(0)v=C$ for some generic constant $C$. However, by the compatibility condition for the solvability of this equation, we obtain $C=0$. Therefore, $v\in ker\,\mathcal{C}^\delta(0)$. This shows that the eigenvalue $0$ is algebraically simple.
\end{proof}

\section{Cell problem for quasiperiodic media}\label{cellproblemquasiperiodic} In this section, we shall recall the cell problem~\cite{Oleinik1982} in the theory of almost periodic homogenization as well as the cell problem for the degenerate periodic operator in higher dimensions~\cite{Kozlov78} for quasiperiodic media.

Let $e_l$ be the unit vector in $\mathbb{R}^d$ with $1$ in the $l^{th}$ place and $0$ elsewhere. For almost periodic media, the cell problem \begin{align}\label{cellproblem}
	-\nabla_x\cdot(A(x)(e_l+\nabla_x w_l))=0
\end{align} is not solvable in the space of almost periodic functions. Hence, an abstract setup is required which is explained below. Let $S=\{\nabla_x \phi:\phi\in \Trig(\mathbb{R}^d;\mathbb{R}) \}$. This is a subspace of $(B^2(\mathbb{R}^d))^d$. We shall call the closure of $S$ in $(B^2(\mathbb{R}^d))^d$ as $W$. For the matrix $A$, we define a bilinear form on $W$ by \begin{align*}
	a(w^1,w^2)=\sum_{j,k=1}^d\mathcal{M}(a_{jk}w^1_jw^2_k),
\end{align*} where $w^1=(w_1^1,w_2^1,\ldots,w_d^1)$ and $w^2=(w_1^2,w_2^2,\ldots, w_d^2)$. By coercivity of the matrix $A$, the bilinear form is coercive. Also, by boundedness of $A$, the bilinear form is continuous on $W\times W$. We also define the following linear form on $W$: \begin{align*}
    L_l(V)\coloneqq -\sum_{k=1}^d\mathcal{M}(a_{kl})v_k.
\end{align*} Again, by boundedness of matrix $A$, the linear form $L$ is continuous. Hence, Lax-Milgram lemma guarantees a solution to the following problem: Find $N^l\in W$ such that $\forall\, V\in W$, we have \begin{align}\label{abstractcell}
	a(N^l,V)=L_l(V).
\end{align} This is the abstract cell problem for almost periodic homogenization~\cite{Oleinik1982} and the homogenized coefficients are defined as \begin{align}
	q_{kl}^*=\mathcal{M}\left(a_{kl}+\sum_{j=1}^da_{kj}N^l_j \right).
\end{align}
However, in the case of quasiperiodic media, one can also define cell problem in higher dimensions as in~\cite{Kozlov78}. The transformation $x\mapsto\Lambda x$ converts the cell problem in $\mathbb{R}^d$~\eqref{cellproblem} to a cell problem posed in $Q$ for the degenerate periodic operator. \begin{align}\label{cellproblem2}
	-D\cdot B(y)D \psi_l= D\cdot B(y)e_l.
\end{align}
Due to the lack of coercivity, we implement the regularizing trick as in~\cite{Blanc2015}. For $0<\delta<1$, we seek the solution $\psi_l^\delta\in H^1_\sharp(Q)/\mathbb{R}$ to the following equation.
\begin{align}\label{cellproblem3}
	-D\cdot B(y)D \psi^\delta_l-\delta\Delta\psi^\delta_l= D\cdot B(y)e_l.
\end{align}
The solution satisfies the a priori bound $||D\psi^\delta_l||^2_{L^2_\sharp(Q)}+\delta||\nabla_y \psi^\delta_l||^2_{L^2_\sharp(Q)}\leq C$ for some generic constant $C$. As a consequence, $D\psi_l^\delta$ converges to some function $\chi^l\in (L^2_\sharp(Q))^d$ for a subsequence in the limit $\delta\to 0$. Using the a priori bounds, we can pass to the limit $\delta\to 0$ in the equation~\eqref{cellproblem3} to show that $\chi^l$ solves the equation~\eqref{cellproblem2} in the form
\begin{align}\label{cellproblem4}
-D\cdot B(y)\chi^l= D\cdot B(y)e_l.
\end{align}
By elliptic regularity, $Dw_l^\delta\in H^s_\sharp(Q)$ for all $s>0$. As a consequence, $Dw_l^\delta\in C^\infty(Q)$. Therefore, $\chi^l\in H^s_\sharp(Q)$ for all $s>0$. Again, $\chi^l\in C^\infty(Q)$ and the equation~\eqref{cellproblem4} holds pointwise. Hence, we can restrict equation~\eqref{cellproblem4} to $\mathbb{R}^d$ using the matrix $\Lambda$. Define $N^l(x)=\chi^l(\Lambda x)$, then $N^l$ solves the abstract cell problem~\eqref{abstractcell}. Therefore, the homogenized coefficients can be written in terms of the solution of the lifted cell problem~\eqref{cellproblem2}.

\begin{align}
q_{kl}^*=\mathcal{M}\left(a_{kl}+\sum_{j=1}^da_{kj}N^l_j \right)=\mathcal{M}_Q\left(b_{kl}+\sum_{j=1}^db_{kj}\chi^l_j \right).
\end{align}

Further, we have also shown that if we define the approximate homogenized tensor $A^{\delta,*}=(q^{\delta,*}_{kl})$ as 
\begin{align}\label{approxBloch}
q^{\delta,*}_{kl}=e_k\cdot A^{\delta,*}e_l=\mathcal{M}_Q\left( b_{kl}+e_k\cdot BDw_l^\delta \right),
\end{align} then $A^{\delta,*}\to A^{*}$, where $A^*=(q^*_{kl})$ is defined by
\begin{align}\label{exactBloch}
q^*_{kl}=e_k\cdot A^{*}e_l=\mathcal{M}_Q\left( b_{kl}+e_k\cdot B\chi^l \right).
\end{align}

\begin{lemma}\label{convergenceofapproxhomo}
     The approximate homogenized matrix $q^{\delta,*}_{kl}$ converges to the homogenized matrix $q^*_{kl}$ of quasiperiodic media as defined in~\eqref{exactBloch}.	
\end{lemma}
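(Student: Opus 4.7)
The plan is to reduce the lemma to a single application of weak $L^2$-convergence, since all the heavy technical work has already been done in the discussion preceding the statement. Subtracting \eqref{exactBloch} from \eqref{approxBloch} immediately gives
\begin{align*}
q^{\delta,*}_{kl} - q^*_{kl} = \mathcal{M}_Q\!\left( e_k \cdot B\bigl(D\psi_l^\delta - \chi^l\bigr) \right) = \int_Q (B^T e_k)(y) \cdot \bigl(D\psi_l^\delta(y) - \chi^l(y)\bigr) \, dy.
\end{align*}
The vector field $B^T e_k$ is fixed and, by \ref{H1}, lies in $(L^2_\sharp(Q))^d$. The a priori estimate $\|D\psi_l^\delta\|^2_{L^2_\sharp(Q)} + \delta \|\nabla_y \psi_l^\delta\|^2_{L^2_\sharp(Q)} \leq C$ recorded just before~\eqref{cellproblem4} supplies a subsequence $\delta_n \downarrow 0$ along which $D\psi_l^{\delta_n} \rightharpoonup \chi^l$ weakly in $(L^2_\sharp(Q))^d$. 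Pairing a weakly convergent sequence with the fixed $L^2$-function $B^T e_k$ forces the integral to tend to zero, yielding $q^{\delta_n,*}_{kl} \to q^*_{kl}$.

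To upgrade subsequential convergence to convergence of the full family as $\delta \to 0$, I would appeal to the uniqueness of $\chi^l$. Every weak subsequential limit lies in the closure of $\{D\phi : \phi \in C^\infty_\sharp(Q)\}$ in $(L^2_\sharp(Q))^d$ and solves the degenerate limiting equation \eqref{cellproblem4}, by the argument already carried out preceding the statement. Pulling back via $x\mapsto \Lambda x$, this weak limit is precisely the object associated with the unique Lax-Milgram solution $N^l \in W$ of the abstract cell problem \eqref{abstractcell}. Consequently the weak limit is independent of the extracted subsequence, and a standard Urysohn-type argument promotes subsequential convergence to convergence of the full family $\delta \to 0$.

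The proof is essentially bookkeeping on top of work already done; the only point worth emphasising is that the test function $B^T e_k$ against which one pairs the weakly convergent sequence must be independent of $\delta$, which it manifestly is. The mildly subtle step, and the only place where anything could go wrong, is the identification of all weak subsequential limits with the same unique object from \eqref{abstractcell}--\eqref{cellproblem4}; without this one would obtain only $q^{\delta_n,*}_{kl} \to q^*_{kl}$ along subsequences rather than along the whole parameter family.
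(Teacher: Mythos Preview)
Your proposal is correct and follows essentially the same line as the paper. In the paper the lemma is not given a separate proof; the sentence ``Further, we have also shown that \ldots\ then $A^{\delta,*}\to A^{*}$'' immediately preceding the statement indicates that the authors regard the convergence as already contained in the preceding discussion (a priori bound, weak subsequential limit $D\psi_l^\delta\rightharpoonup\chi^l$, passage to the limit in~\eqref{cellproblem3}). Your write-up makes this bookkeeping explicit, and in fact goes slightly further than the paper by spelling out the Urysohn step---identifying every weak subsequential limit with the unique Lax--Milgram solution of~\eqref{abstractcell} via the isometry of Remark~\ref{Remark1}\ref{coincide}---so that convergence holds for the full family $\delta\to 0$ and not merely along a subsequence.
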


\section{Characterization of homogenized tensor}\label{characterization} Now, we shall compute derivatives with respect to $\eta$ of the first regularized Bloch eigenvalue and first regularized Bloch eigenfunction at the point $\eta=0$ and identify the homogenized tensor for quasiperiodic media. Note that the regularized Bloch eigenvalues and eigenfunctions are defined as functions of $\eta\in Y^{'}$. The first regularized Bloch eigenfunction satisfies the following problem in $Q$:
\begin{align}\label{Blochproblem3}
	-(D+i\eta)\cdot B(y)(D+i\eta)\phi^\delta_1(y;\eta)-\delta\Delta\phi^\delta_1(y;\eta)=\lambda^\delta_1(\eta)\phi^\delta_1(y;\eta).
\end{align}
We know that $\lambda^\delta_1(0)=0$. For $\eta\in Y^{'}$, recall that $\mathcal{C}^\delta(\eta)= -(D+i\eta)\cdot B(y)(D+i\eta)-\delta\Delta$. In the rest of this section, we will suppress the dependence on $y$ for convenience. For $l=1,2,\ldots,d$, differentiate equation~\eqref{Blochproblem3} with respect to $\eta_l$ to obtain
\begin{align}\label{derivative1}
	\frac{\partial \mathcal{C}^\delta}{\partial \eta_l}(\eta){\phi^\delta_1}(\eta)+\mathcal{C}^\delta(\eta)\frac{\partial {\phi^\delta_1}}{\partial \eta_l}(\eta)={\lambda}^\delta_1(\eta)\frac{\partial {\phi^\delta_1}}{\partial \eta_l}(\eta)+\frac{\partial {\lambda}^\delta_1}{\partial\eta_l}(\eta){\phi^\delta_1}(\eta),
\end{align} where $\displaystyle \frac{\partial \mathcal{C}}{\partial \eta_l}(\eta)=-i D\cdot(Be_l)-ie_l\cdot(B D)+e_l\cdot B\eta+\eta\cdot B e_l$, where $e_l$ is the unit vector in $\mathbb{R}^d$ with $1$ in the $l^{th}$ place and $0$ elsewhere. We multiply~\eqref{derivative1} by $\overline{{\phi^\delta_1}(\eta)}$, take mean value over $Q$ and set $\eta=0$ to get $\displaystyle\frac{\partial{\lambda}^\delta_1}{\partial\eta_l}(0)=0$ for all $l=1,2,\ldots,d$.

On the other hand, if we set $\eta=0$ in~\eqref{derivative1}, we obtain 

\begin{align*}
	\mathcal{C}^\delta(0)\frac{\partial{\phi^\delta_1}}{\partial\eta_l}(0)=-\frac{\partial\mathcal{C}^\delta}{\partial \eta_l}(0){\phi^\delta_1}(0),
\end{align*} or
\begin{align*}
	\left(-D \cdot B(y)D-\delta\Delta\right)\frac{\partial {\phi^\delta_1}}{\partial \eta_l}(0)=D\cdot B(y)e_l i{\phi^\delta_1}(0).
\end{align*} 

Hence, $\displaystyle\psi^\delta_l-\frac{1}{i{\phi^\delta}_1(0)}\frac{\partial\phi^\delta_1}{\partial\eta_l}(0)$ is a constant.

Now, differentiate~\eqref{derivative1} with respect to $\eta_k$ to obtain
\begin{align}\label{derivative2}
	\left(\frac{\partial^2 \mathcal{C}^\delta}{\partial\eta_k\partial\eta_l}(\eta)-\frac{\partial^2 {\lambda}^\delta_1}{\partial\eta_k\partial\eta_l}(\eta) \right)\phi^\delta_1(\eta)+\left(\frac{\partial \mathcal{C}^\delta}{\partial\eta_k}(\eta)-\frac{\partial {\lambda}^\delta_1}{\partial\eta_k}(\eta) \right)\frac{\partial\phi^\delta_1}{\partial\eta_l}(\eta)+\nonumber\\
	\qquad\left(\frac{\partial \mathcal{C}^\delta}{\partial\eta_l}(\eta)-\frac{\partial {\lambda}^\delta_1}{\partial\eta_l}(\eta) \right)\frac{\partial\phi^\delta_1}{\partial\eta_k}(\eta)+\left( \mathcal{C}^\delta(\eta)-\lambda_1^\delta(\eta) \right)\frac{\partial^2\phi_1^\delta}{\partial\eta_l\partial\eta_k}(\eta)=0.
\end{align}
Multiply with $\overline{\phi^\delta_1(\eta)}$, take mean value over $Q$ and set $\eta=0$ to obtain
\begin{align}\label{homogenizedtensor2}
	\frac{1}{2}\frac{\partial^2 {\lambda}^\delta_1}{\partial\eta_k\partial\eta_l}(0)=\mathcal{M}_Q\left(b_{kl}+\frac{1}{2}e_k\cdot B D w_l+\frac{1}{2}e_l\cdot B D w_k\right).
\end{align} Thus, we have proved the following theorem:

\begin{theorem}\label{Hessian}
	The regularized first Bloch eigenvalue and eigenfunction satisfy:
	\begin{enumerate}
		\item $\lambda^\delta_1(0)=0$.
		\item The eigenvalue $\lambda^\delta_1(\eta)$ has a critical point at $\eta=0$, i.e., \begin{align}\frac{\partial \lambda^\delta_1}{\partial \eta_l}(0)=0, \forall l=1,2,\ldots,d.\end{align}
		\item For $l=1,2,\ldots,d,$ the derivative of the eigenvector $(\partial \phi_1^\delta/\partial\eta_l)(0)$ satisfies:
		
		$(\partial \phi_1^\delta/\partial\eta_l)(y;0)-i\phi^\delta_1(y;0)\psi^{\delta}_l(y)$ is a constant in $y$ where $\psi^\delta_l$ solves the cell problem~\eqref{cellproblem3}.
		\item The Hessian of the first Bloch eigenvalue at $\eta=0$ is twice the approximate homogenized matrix $q_{kl}^{\delta,*}$ as defined in~\eqref{approxBloch}, i.e.,
		\begin{align}\label{identificationofapproximatehomo}
		\frac{1}{2}\frac{\partial^2\lambda^\delta_1}{\partial\eta_k\partial\eta_l}(0)=q_{kl}^{\delta,*}		
		\end{align}
	\end{enumerate}
\end{theorem}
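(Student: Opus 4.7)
The plan is to leverage the analyticity of $\eta\mapsto(\lambda_1^\delta(\eta),\phi_1^\delta(\cdot,\eta))$ from Theorem~\ref{analytic}, together with $\ker\mathcal{C}^\delta(0)=\mathbb{R}$ (items (vi)--(vii) in its proof), to differentiate the eigenvalue equation~\eqref{Blochproblem3} at $\eta=0$ up to second order and read off each of the four claims in turn. For item (1), I would observe that constants lie in $\ker\mathcal{C}^\delta(0)$, so by the G\aa rding inequality of Lemma~\ref{coercivityfortranslate} zero is the smallest eigenvalue: $\lambda_1^\delta(0)=0$ and $\phi_1^\delta(\cdot,0)$ may be chosen constant. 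For item (2), I would differentiate~\eqref{Blochproblem3} in $\eta_l$ to obtain~\eqref{derivative1} and pair it in the $L^2_\sharp(Q)$-mean with $\overline{\phi_1^\delta(\eta)}$; self-adjointness of $\mathcal{C}^\delta(\eta)$ cancels $\mathcal{C}^\delta(\eta)(\partial\phi_1^\delta/\partial\eta_l)$ against $\lambda_1^\delta(\eta)(\partial\phi_1^\delta/\partial\eta_l)$, leaving $(\partial\lambda_1^\delta/\partial\eta_l)(0)\,\mathcal{M}_Q(|\phi_1^\delta(\cdot,0)|^2) = \mathcal{M}_Q(\partial\mathcal{C}^\delta/\partial\eta_l(0)\,|\phi_1^\delta(\cdot,0)|^2)$. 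Since $\partial\mathcal{C}^\delta/\partial\eta_l(0)=-iD\cdot(Be_l)-ie_l\cdot(BD)$ is a first-order divergence-form operator applied to a constant, its $Q$-mean vanishes by periodic integration by parts.

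For item (3), setting $\eta=0$ in~\eqref{derivative1} and invoking items (1)--(2) reduces the equation to $\mathcal{C}^\delta(0)(\partial\phi_1^\delta/\partial\eta_l)(0) = i\phi_1^\delta(\cdot,0)\,D\cdot(Be_l)$, where the $e_l\cdot(BD)$ piece drops because $\phi_1^\delta(\cdot,0)$ is constant. Comparing with the regularized cell problem~\eqref{cellproblem3} satisfied by $\psi_l^\delta$, the difference $(\partial\phi_1^\delta/\partial\eta_l)(y;0) - i\phi_1^\delta(y;0)\psi_l^\delta(y)$ lies in $\ker\mathcal{C}^\delta(0)=\mathbb{R}$, giving the claim.

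For item (4), I would differentiate~\eqref{derivative1} once more in $\eta_k$ to obtain~\eqref{derivative2}, pair with $\overline{\phi_1^\delta(\eta)}$ in the $Q$-mean, and set $\eta=0$. Self-adjointness together with $\phi_1^\delta(\cdot,0)\in\ker\mathcal{C}^\delta(0)$ annihilates the last term of~\eqref{derivative2}, and item (2) removes the two first-order $\lambda$-terms. Since $\partial^2\mathcal{C}^\delta/\partial\eta_k\partial\eta_l(0)=e_l\cdot Be_k+e_k\cdot Be_l=2b_{kl}$ by symmetry of $B$, the surviving zeroth-order contribution is $2\mathcal{M}_Q(b_{kl})|\phi_1^\delta(\cdot,0)|^2$. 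Substituting item (3) into the two cross-terms $\mathcal{M}_Q(\partial\mathcal{C}^\delta/\partial\eta_k(0)(\partial\phi_1^\delta/\partial\eta_l)(0)\overline{\phi_1^\delta(\cdot,0)})+(k\leftrightarrow l)$ and absorbing the $D\cdot(Be_k\psi_l^\delta)$ pieces by periodic integration by parts yields $|\phi_1^\delta(\cdot,0)|^2[\mathcal{M}_Q(e_k\cdot BD\psi_l^\delta) + \mathcal{M}_Q(e_l\cdot BD\psi_k^\delta)]$. Dividing by $\mathcal{M}_Q(|\phi_1^\delta(\cdot,0)|^2)$ reproduces~\eqref{homogenizedtensor2}, and then symmetry of $B$ applied to the cell problem~\eqref{cellproblem3} identifies the right-hand side with $q_{kl}^{\delta,*}$ of~\eqref{approxBloch}, giving~\eqref{identificationofapproximatehomo}.

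The main obstacle I anticipate is the bookkeeping in item (4): symmetrizing the cross-terms in $(k,l)$, tracking the complex factors $i$ that cancel between the $-i$ in $\partial\mathcal{C}^\delta/\partial\eta_k(0)$ and the $i$ in $(\partial\phi_1^\delta/\partial\eta_l)(0) = i\phi_1^\delta(\cdot,0)\psi_l^\delta + \mathrm{const}$, and correctly identifying the resulting divergence term with $\mathcal{M}_Q(e_k\cdot BD\psi_l^\delta)$ via periodic integration by parts. The regularizer $\delta\Delta$ does not appear in $\partial^2\mathcal{C}^\delta/\partial\eta_k\partial\eta_l$ (it is independent of $\eta$) and enters the final expression only implicitly through $\psi_l^\delta$, which is precisely why the Hessian produces the approximate (rather than exact) homogenized tensor; taking $\delta\to 0$ later, via Lemma~\ref{convergenceofapproxhomo}, recovers $q_{kl}^{*}$.
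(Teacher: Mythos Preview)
Your proposal is correct and follows essentially the same approach as the paper: differentiate the eigenvalue equation~\eqref{Blochproblem3} once to obtain~\eqref{derivative1}, use it paired with $\overline{\phi_1^\delta}$ at $\eta=0$ for items (2)--(3), then differentiate again to obtain~\eqref{derivative2} and extract~\eqref{homogenizedtensor2} for item (4). You supply more explicit justification (the self-adjointness cancellations, the integration-by-parts, the symmetrization needed to match~\eqref{homogenizedtensor2} with~\eqref{approxBloch}) than the paper does, but the route is identical.
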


\section{Quasiperiodic Bloch transform}\label{quasibloch} We shall normalize $\phi^\delta_1(y;0)$ to be $(2\pi)^{-d/2}$. The Bloch problem at $\epsilon$-scale is given by \begin{align}\label{Blochproblemepsilon}
-(D_{y^{'}}+i\xi)\cdot B(y^{'}/\epsilon)(D_{y^{'}}+i\xi)\phi^{\delta,\epsilon}_1(y^{'};\xi)-\delta\Delta_{y^{'}}\phi^{\delta,\epsilon}_1(y^{'};\xi)={\lambda}^{\delta,\epsilon}_1(\xi){\phi}_1^{\delta,\epsilon}(y^{'};\xi)
\end{align} for $y\in \epsilon Q$ and $\xi\in\epsilon Y^{'}$. Due to the transformation $y=y^{'}/\epsilon$ and $\eta=\epsilon\xi$, we have ${\lambda}_1^{\delta,\epsilon}(\xi)=\epsilon^{-2}{\lambda}_1^\delta(\epsilon\xi)$ and ${\phi}_1^{\delta,\epsilon}(y^{'};\xi)={\phi}^\delta_1(y^{'}/\epsilon;\epsilon\xi)$.
The above equation holds pointwise for $y^{'}\in \epsilon Q$ and is analytic for $\xi\in \epsilon^{-1} U^\delta$. For the purpose of Bloch wave homogenization, we need to restrict the regularized Bloch eigenvalues and eigenfunctions to $\mathbb{R}^d$ using the matrix $\Lambda$. Let us define $\tilde{\phi}_1^{\delta,\epsilon}(x;\xi)\coloneqq\phi^\delta_1(\frac{\Lambda x}{\epsilon};\epsilon\xi)$. Also define $\beta_1^{\delta;\epsilon}(y^{'},\xi)\coloneqq\sqrt{\delta}\Delta_{y^{'}}\phi_1^{\delta,\epsilon}(y^{'};\xi)$ and its restriction $\tilde{\beta}_1^{\delta;\epsilon}(x,\xi)=\sqrt{\delta}\Delta_x\phi_1^{\delta,\epsilon}(\Lambda x;\xi)$, then the restriction of the first regularized Bloch eigenfunction satisfies the following approximate spectral problem in $\mathbb{R}^d$.

\begin{align}\label{projectedeq}
	-(\nabla_x+i\xi)\cdot A\left(\frac{x}{\epsilon}\right)(\nabla_x+i\xi)\tilde{\phi}_1^{\delta,\epsilon}(x;\xi)-\sqrt{\delta}\tilde{\beta}_1^{\delta;\epsilon}(x,\xi)={\lambda}_1^{\delta,\epsilon}(\xi)\tilde{\phi}_1^{\delta,\epsilon}(x;\xi).
\end{align}

We can compare this to our original goal of solving equation~\eqref{quasiBloch} in $\mathbb{R}^d$. Although we could not solve the exact quasiperiodic Bloch spectral problem, we could solve an approximate quasiperiodic Bloch problem using the lifted periodic problem. Interestingly, the functions $\tilde{\phi}_1^{\delta,\epsilon}(x;\xi)$ and $\tilde{\beta}_1^{\delta;\epsilon}(x,\xi)$ are quasiperiodic functions of the first variable.

Now we can define a dominant Bloch coefficient for compactly supported functions in $\mathbb{R}^d$ by employing the first regularized Bloch eigenfunction as follows: Let $g\in L^2(\mathbb{R}^d)$ with compact support, then define
\begin{align}
	\mathcal{B}_1^{\delta,\epsilon} g(\xi)\coloneqq \int_{\mathbb{R}^d} g(x) e^{-ix\cdot\xi}\,\overline{\tilde\phi_1^{\delta,\epsilon}}(x;\xi)\,dx.
\end{align}

For the next section, we need to know the limit of Bloch transform of a sequence of functions as below.

\begin{theorem}
	Let $K\subseteq\mathbb{R}^d$ be a compact set and $(g^\epsilon)$ be a sequence of functions in $L^2(\mathbb{R}^d)$ such that $g^\epsilon=0$ outside $K$. Suppose that $g^\epsilon\rightharpoonup g$ in $L^2(\mathbb{R}^d)$-weak for some function $g\in L^2(\mathbb{R}^d)$. Then it holds that \begin{align*}
          \chi_{\epsilon^{-1}U^\delta}\mathcal{B}_1^{\delta,\epsilon} g^\epsilon\rightharpoonup \widehat{g}
	\end{align*} in $L^2_{\text{loc}}(\mathbb{R}^d_\xi)$-weak, where $\widehat{g}$ denotes the Fourier transform of $g$.
\end{theorem}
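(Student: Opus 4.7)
The plan is to exploit the normalisation $\phi^\delta_1(y;0)=(2\pi)^{-d/2}$ which makes the zeroth regularized Bloch eigenfunction constant in $y$: the Bloch transform then reduces, to leading order, to the ordinary Fourier transform, with a remainder controlled by the size of $\eta=\epsilon\xi$. Fix an arbitrary compact set $K'\subset\mathbb{R}^d_\xi$. By Theorem~\ref{analytic}, $U^\delta=B_{\theta_\delta}(0)$, and for $\epsilon$ small enough $\epsilon K'\subset U^\delta$, so the indicator $\chi_{\epsilon^{-1}U^\delta}$ equals $1$ on $K'$ and we can drop it for the purposes of proving convergence on $K'$.

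I split the Bloch coefficient as
\begin{align*}
	\mathcal{B}_1^{\delta,\epsilon}g^\epsilon(\xi) &= \widehat{g^\epsilon}(\xi) + E_\epsilon(\xi),\\
	E_\epsilon(\xi) &:= \int_K g^\epsilon(x)e^{-ix\cdot\xi}\bigl[\overline{\phi^\delta_1(\Lambda x/\epsilon;\epsilon\xi)} - (2\pi)^{-d/2}\bigr]\,dx,
\end{align*}
where $\widehat{\cdot}$ denotes the Fourier transform with the normalisation $\widehat{g}(\xi)=(2\pi)^{-d/2}\int g(x)e^{-ix\cdot\xi}\,dx$. Since the Fourier transform is then a unitary operator on $L^2(\mathbb{R}^d)$, it preserves weak convergence, and therefore $\widehat{g^\epsilon}\rightharpoonup\widehat{g}$ in $L^2(\mathbb{R}^d)$-weak, hence in $L^2(K')$-weak.

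For the remainder, Theorem~\ref{analytic} gives analyticity of $\eta\mapsto\phi^\delta_1(\cdot;\eta)$ into $H^1_\sharp(Q)$ on $U^\delta$. Since the coefficients of $\mathcal{C}^\delta(0)$ are smooth and $Q$-periodic, a bootstrap via elliptic regularity applied to the equation satisfied by $\phi^\delta_1(\cdot;\eta)-\phi^\delta_1(\cdot;0)$ promotes this to analyticity into every $H^s_\sharp(Q)$; Sobolev embedding on $\mathbb{T}^M$ then yields
\begin{align*}
	\|\phi^\delta_1(\cdot;\eta)-(2\pi)^{-d/2}\|_{L^\infty(Q)}\leq C_\delta|\eta|\qquad\text{for }|\eta|<\theta_\delta.
\end{align*}
Setting $y=\Lambda x/\epsilon$ and $\eta=\epsilon\xi$, this gives the pointwise bound $|\phi^\delta_1(\Lambda x/\epsilon;\epsilon\xi)-(2\pi)^{-d/2}|\leq C_\delta\epsilon|\xi|$ for all $x\in\mathbb{R}^d$, $\xi\in K'$ and small $\epsilon$. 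Inserting this into $E_\epsilon$ and applying Cauchy-Schwarz with $\operatorname{supp}g^\epsilon\subset K$,
\begin{align*}
	|E_\epsilon(\xi)|\leq C_\delta\,\epsilon\,|\xi|\,|K|^{1/2}\,\|g^\epsilon\|_{L^2(\mathbb{R}^d)},
\end{align*}
which tends to $0$ uniformly on $K'$ since $(g^\epsilon)$ is bounded in $L^2$ by weak convergence. In particular $E_\epsilon\to 0$ strongly in $L^2(K')$.

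Combining the two contributions gives $\chi_{\epsilon^{-1}U^\delta}\mathcal{B}_1^{\delta,\epsilon}g^\epsilon\rightharpoonup\widehat{g}$ in $L^2(K')$-weak, and since $K'\subset\mathbb{R}^d_\xi$ was arbitrary this is the claimed $L^2_{\text{loc}}$-weak convergence. The only delicate point I anticipate is the upgrade from the $H^1_\sharp(Q)$-analyticity provided by Theorem~\ref{analytic} to an $L^\infty$ Lipschitz estimate in $\eta$ near the origin; everything else reduces to a first-order expansion of the Bloch eigenfunction in $\eta$ combined with Plancherel's identity.
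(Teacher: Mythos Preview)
Your proof is correct and follows essentially the same route as the paper: both split $\mathcal{B}_1^{\delta,\epsilon}g^\epsilon(\xi)$ into the Fourier term coming from $\overline{\tilde\phi_1^{\delta,\epsilon}}(x;0)=(2\pi)^{-d/2}$ and a remainder of order $O(\epsilon|\xi|)$ coming from the Lipschitz continuity of $\eta\mapsto\phi_1^\delta(\cdot;\eta)$ near $\eta=0$. Your write-up is in fact more complete than the paper's sketch---you make explicit the removal of the cutoff $\chi_{\epsilon^{-1}U^\delta}$ on compact $\xi$-sets, the preservation of weak convergence under the unitary Fourier transform, and the elliptic-regularity/Sobolev-embedding upgrade from $H^1_\sharp(Q)$-analyticity to an $L^\infty$ Lipschitz bound, all of which the paper either leaves implicit or defers to~\cite{Conca1997}.
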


\begin{proof}
    The function $\mathcal{B}_1^{\delta,\epsilon} g^\epsilon$ is defined for $\xi\in \epsilon^{-1}Y^{'}$. However, we shall treat it as a function on $\mathbb{R}^d$ by extending it outside $\epsilon^{-1}U^\delta$ by zero. We can write
    \begin{align*}
    	\mathcal{B}_1^{\delta,\epsilon} g^\epsilon(\xi)= \int_{\mathbb{R}^d} g(x)e^{-ix\cdot\xi}\overline{\tilde{\phi}_1^{\delta,\epsilon}}(x;0)\,dx+\int_{\mathbb{R}^d} g(x)e^{-ix\cdot\xi}\left(\tilde{\phi}_1^\delta\left(\frac{x}{\epsilon};\epsilon\xi\right) -\tilde{\phi}^\delta_1\left(\frac{x}{\epsilon};0\right) \right)dx.
    \end{align*}
The first term above converges to the Fourier transform of $g$ on account of the normalization of $\phi_1(y;0)$ whereas the second term goes to zero since it is $O(\epsilon\xi)$ due to the Lipschitz continuity of the first regularized Bloch eigenfunction. More details including the proof of Lipschitz continuity of Bloch eigenvalues and eigenfunctions may be found in~\cite{Conca1997}.\end{proof}

\section{Homogenization theorem}\label{homotheorem}
For $\Omega\subseteq\mathbb{R}^d$, consider the boundary value problem
\begin{align}\label{tobehomo}
	\mathcal{A}^\epsilon u^\epsilon=-\nabla_x\cdot(A(x/\epsilon)\nabla_x u^\epsilon(x))=f\mbox{ in }\Omega\nonumber\\
	u^\epsilon=0 \mbox{ on }\partial\Omega
\end{align} where $f\in L^2(\Omega)$ and $\Omega$ is bounded domain with $C^2$ boundary. The hypothesis on $f$ and $\Omega$ is required to make sure that $u^\epsilon\in H^2(\Omega)$. In this section, we shall assume summation over repeated indices for ease of notation. We shall prove the following theorem.

\begin{theorem}\label{homog}
	Let $\Omega$ be a bounded domain in $\mathbb{R}^d$ with $C^2$ boundary and $f\in L^2(\Omega)$. Let $u^\epsilon\in H^1_0(\Omega)$ be a solution of~\eqref{tobehomo} such that $u^\epsilon$ converges weakly to $u^*$ in $H^1_0(\Omega)$, and
	\begin{align}\label{equation}
	\mathcal{A}^\epsilon u^\epsilon=f\,\mbox{in}\,~\Omega.
	\end{align}
	Then
	\begin{enumerate}
		\item For all $k=1,2,\ldots,d$, we have the following convergence of fluxes:
		\begin{align}
		a^\epsilon_{kl}(x)\frac{\partial u^\epsilon}{\partial x_l}(x)\rightharpoonup q_{kl}^*\frac{\partial u^*}{\partial x_l}(x) \mbox{ in } L^2(\Omega)\mbox{-weak}.
		\end{align}
		\item The limit $u^*$ satisfies the homogenized equation:
		\begin{align}\label{homoperator}
		\mathcal{A}^{hom}u^*=-\frac{\partial}{\partial x_k}\left(q^*_{kl}\frac{\partial u^*}{\partial x_l}\right)=f\,\mbox{ in }\,\Omega.
		\end{align}
	\end{enumerate}
\end{theorem}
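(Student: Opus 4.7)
\emph{Proof plan.} The strategy is to adapt Conca--Vanninathan's Bloch-wave homogenization method to the quasiperiodic setting by using the approximate Bloch transform $\mathcal{B}_1^{\delta,\epsilon}$ of Section~\ref{quasibloch} in place of a genuine direct integral decomposition. After extending $u^\epsilon$ by zero outside $\Omega$, I apply $\mathcal{B}_1^{\delta,\epsilon}$ to the identity $\mathcal{A}^\epsilon u^\epsilon = f$, using the differential identity $\nabla_x(e^{-ix\cdot\xi}\overline{\tilde{\phi}_1^{\delta,\epsilon}})=e^{-ix\cdot\xi}\,\overline{(\nabla_x+i\xi)\tilde{\phi}_1^{\delta,\epsilon}}$, integrating by parts twice, and then invoking the approximate spectral equation~\eqref{projectedeq}. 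This produces the approximate diagonalization
\begin{equation*}
\mathcal{B}_1^{\delta,\epsilon}(f)(\xi)=\lambda_1^{\delta,\epsilon}(\xi)\,\mathcal{B}_1^{\delta,\epsilon}(u^\epsilon)(\xi)+\sqrt{\delta}\,R^{\delta,\epsilon}(\xi),\qquad \xi\in\epsilon^{-1}U^\delta,
\end{equation*}
with remainder $R^{\delta,\epsilon}(\xi)=\int_{\mathbb{R}^d} u^\epsilon(x)\,e^{-ix\cdot\xi}\,\overline{\tilde{\beta}_1^{\delta;\epsilon}(x,\xi)}\,dx$ arising from the lifted regularization.

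Next I take $\epsilon\to 0$ with $\delta$ fixed. The transform convergence theorem of Section~\ref{quasibloch} yields $\mathcal{B}_1^{\delta,\epsilon}(f)\rightharpoonup \widehat{f}$ and $\mathcal{B}_1^{\delta,\epsilon}(u^\epsilon)\rightharpoonup \widehat{u^*}$ in $L^2_{\loc}(\mathbb{R}^d_\xi)$-weak. The rescaling $\lambda_1^{\delta,\epsilon}(\xi)=\epsilon^{-2}\lambda_1^\delta(\epsilon\xi)$ combined with Theorem~\ref{Hessian} (which gives $\lambda_1^\delta(0)=0$, $\nabla_\eta\lambda_1^\delta(0)=0$, and Hessian $2q^{\delta,*}$) together with the analyticity of $\lambda_1^\delta$ on $U^\delta$ from Theorem~\ref{analytic} furnishes the locally uniform limit $\lambda_1^{\delta,\epsilon}(\xi)\to q^{\delta,*}_{kl}\xi_k\xi_l$ on any compact subset of $\mathbb{R}^d_\xi$. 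Uniform $L^2$ bounds on $\tilde{\beta}_1^{\delta;\epsilon}$, valid for each fixed $\delta$ by elliptic regularity for~\eqref{Blochproblem3} and the smoothness assertion of Theorem~\ref{smoothnessofeigenfunc}, together with the boundedness of $(u^\epsilon)$ in $L^2$, keep $\sqrt{\delta}\,R^{\delta,\epsilon}$ of order $O(\sqrt{\delta})$ in $L^2_{\loc}(\mathbb{R}^d_\xi)$ uniformly in $\epsilon$. Passing $\epsilon\to 0$ therefore gives $\widehat{f}(\xi)=q^{\delta,*}_{kl}\xi_k\xi_l\,\widehat{u^*}(\xi)+O(\sqrt{\delta})$, and sending $\delta\to 0$ with Lemma~\ref{convergenceofapproxhomo} replaces $q^{\delta,*}$ by $q^*$, yielding $\widehat{f}(\xi)=q^*_{kl}\xi_k\xi_l\,\widehat{u^*}(\xi)$, the Fourier form of~\eqref{homoperator}.

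For the flux convergence in part~(1), I employ the same test strategy with an additional localization: testing against $\varphi(x)\,e^{ix\cdot\xi}\,\overline{\tilde{\phi}_1^{\delta,\epsilon}}$ for $\varphi\in\mathcal{D}(\Omega)$ and tracking the flux $a^\epsilon_{kl}\partial_l u^\epsilon$ through the same double integration by parts. The correction arises precisely from the derivative $(\partial\phi_1^\delta/\partial\eta_l)(0)$, which by Theorem~\ref{Hessian}~(3) differs from $i\phi_1^\delta(0)\psi_l^\delta$ by a constant; substituting this identity into the limit computation reproduces the cell-problem corrector and, together with~\eqref{identificationofapproximatehomo}, extracts exactly $q^{\delta,*}_{kl}\partial_l u^*$ in the $\epsilon\to 0$ limit, after which $\delta\to 0$ recovers $q^*_{kl}\partial_l u^*$. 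The principal obstacle is the careful book-keeping of the regularization error $\sqrt{\delta}\,R^{\delta,\epsilon}$: one must verify that $\|\tilde{\beta}_1^{\delta;\epsilon}\|_{L^2}$ remains bounded independently of $\epsilon$ (for fixed $\delta$), reducing to a uniform Sobolev bound on $\phi_1^\delta(\cdot,\eta)$ over the cell $Q$. A secondary subtlety is the ordering of limits, dictated by the fact that the analyticity neighborhood $U^\delta$ shrinks with $\delta$: one must check that $\epsilon^{-1}U^\delta$ eventually exhausts any compact set in $\mathbb{R}^d_\xi$ as $\epsilon\to 0$ for each fixed $\delta$ before the second limit is taken.
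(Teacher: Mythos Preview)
Your overall strategy matches the paper's, but there are two genuine gaps.

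\textbf{Localization.} You extend $u^\epsilon$ by zero to $\mathbb{R}^d$ and then ``integrate by parts twice'' against $e^{-ix\cdot\xi}\overline{\tilde\phi_1^{\delta,\epsilon}}$. The second integration by parts produces a boundary term $\int_{\partial\Omega}(A\nabla u^\epsilon\cdot n)\,e^{-ix\cdot\xi}\overline{\tilde\phi_1^{\delta,\epsilon}}\,dS$ which does \emph{not} vanish (only $u^\epsilon|_{\partial\Omega}=0$, not its conormal derivative); equivalently, the identity $\mathcal{A}^\epsilon u^\epsilon=f$ fails in $\mathcal{D}'(\mathbb{R}^d)$ after zero extension. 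The paper handles this by first multiplying by a cutoff $\psi_0\in\mathcal{D}(\Omega)$, so that $\psi_0 u^\epsilon\in H^2(\mathbb{R}^d)$ and the integration by parts is legitimate; the price is two commutator terms $g^\epsilon,h^\epsilon$ (equations~\eqref{geps}--\eqref{heps}). These are not nuisance terms: the weak limit $\sigma_k^*$ of the flux enters the limiting identity precisely through $g^\epsilon$, and both the flux identification and the homogenized equation are then read off from the single localized relation~\eqref{Bloch3}. Your separate treatment of parts~(1) and~(2) misses this structure.

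\textbf{Uniformity in $\delta$ of the remainder.} You claim $L^2$ bounds on $\tilde\beta_1^{\delta;\epsilon}$ ``for each fixed $\delta$ by elliptic regularity'' and conclude $\sqrt{\delta}\,R^{\delta,\epsilon}=O(\sqrt\delta)$. But elliptic regularity for~\eqref{Blochproblem3} degenerates as $\delta\to 0$, so the $L^2$ bound on $\tilde\beta_1^{\delta;\epsilon}=\sqrt\delta\,\Delta\phi_1^{\delta,\epsilon}$ may blow up like a negative power of $\delta$; then the weak limit $R^\delta$ of $R^{\delta,\epsilon}$ need not be bounded and $\sqrt\delta\,R^\delta$ need not vanish. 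The paper's Step~4 is devoted exactly to this point: using the Taylor expansion of $\phi_1^\delta(\cdot;\eta)$ at $\eta=0$ together with the cell-problem estimate $\|D\psi_l^\delta\|_{L^2}^2+\delta\|\nabla_y\psi_l^\delta\|_{L^2}^2\le C$ (which controls $\sqrt\delta\,\Delta\psi_l^\delta$ in $H^{-1}_\sharp$ uniformly in $\delta$), one obtains a bound on $\tilde\beta_1^{\delta,\epsilon}$ in the weaker space $L^2_{\loc}(\mathbb{R}^d_\xi;H^{-1}_{\loc}(\mathbb{R}^d))$ that is uniform in \emph{both} $\epsilon$ and $\delta$. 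Pairing against $\psi_0 u^\epsilon\in H^1$ then gives $\Upsilon^{\delta,\epsilon}$ bounded in $L^2_{\loc}$ uniformly, so its weak limit $\Upsilon^\delta$ is bounded and $\sqrt\delta\,\Upsilon^\delta\to 0$. Your ``secondary subtlety'' about $\epsilon^{-1}U^\delta$ exhausting compacta is correctly identified, but this uniform-in-$\delta$ control of the remainder is the more serious missing ingredient.
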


The proof of Theorem~\ref{homog} is divided into the following steps. We begin by localizing the equation~\eqref{equation} which is posed on $\Omega$, so that it is posed on $\mathbb{R}^d$. We take the quasiperiodic Bloch transform $\mathcal{B}^{\delta,\epsilon}_1$ of this equation and pass to the limit $\epsilon\to 0$, followed by the limit $\delta\to 0$. 

\paragraph{Step 1:} Let $\psi_0$ be a fixed smooth function supported in a compact set $K\subset\mathbb{R}^d$. Since $u^\epsilon$ satisfies $\mathcal{A}^\epsilon u^\epsilon=f$, $\psi_0 u^\epsilon$ satisfies
\begin{align}\label{local}
\mathcal{A}^{\epsilon}(\psi_0 u^\epsilon)(x)=\psi_0f(x)+g^\epsilon(x)+h^{\epsilon}(x)\,\mbox{ in }\,\mathbb{R}^d,
\end{align} where
\begin{align}
g^\epsilon(x)&\coloneqq-\frac{\partial \psi_0}{\partial x_k}(x)a^\epsilon_{kl}(x)\frac{\partial u^\epsilon}{\partial x_l}(x),\label{geps}\\
h^{\epsilon}(x)&\coloneqq-\frac{\partial}{\partial x_k}\left(\frac{\partial \psi_0}{\partial x_l}(x)a^{\epsilon}_{kl}(x)u^\epsilon(x)\right),\label{heps}
\end{align}

\paragraph{Step 2:} Taking the first Bloch transform of both sides of the equation~\eqref{local}, we obtain for $\xi\in\epsilon^{-1}U^\delta$ a.e.
\begin{align}\label{Bloch}
\mathcal{B}^{\delta,\epsilon}_1(\mathcal{A}^\epsilon(\psi_0 u^\epsilon))(\xi)=\mathcal{B}^{\delta,\epsilon}_1(\psi_0 f)(\xi)+\mathcal{B}^{\delta,\epsilon}_1g^\epsilon(\xi)+\mathcal{B}^{\delta,\epsilon}_1h^{\epsilon}(\xi).
\end{align}

\paragraph{Step 3:}\label{step3} Observe that $\psi_0 u^\epsilon\in H^2(\mathbb{R}^d)$. We have
\begin{align}\label{expansion}
	\mathcal{B}^{\delta,\epsilon}_1 (\mathcal{A}^\epsilon(\psi_0 u^\epsilon))&=\int_{\mathbb{R}^d}\mathcal{A}^\epsilon(\psi_0 u^\epsilon)(x)e^{-ix\cdot\xi}\overline{\tilde\phi^{\delta,\epsilon}_1}(x;\xi)\,dx\nonumber\\
	&=\int_{\mathbb{R}^d}(\psi_0 u^\epsilon)(x)\mathcal{A}^\epsilon(e^{-ix\cdot\xi}\overline{\tilde\phi^{\delta,\epsilon}_1}(x;\xi))\,dx\nonumber\\
	&=\lambda_1^{\delta,\epsilon}(\xi)\int_{\mathbb{R}^d}(\psi_0 u^\epsilon)(x)e^{-ix\cdot\xi}\overline{\tilde\phi^\epsilon_1}(x;\xi)\,dx+\sqrt{\delta}\int_{\mathbb{R}^d}(\psi_0 u^\epsilon)(x)e^{-ix\cdot\xi}\overline{\tilde{\beta}^{\delta,\epsilon}_1}(x;\xi)\,dx\nonumber\\
	&=\lambda_1^{\delta,\epsilon}(\xi)\mathcal{B}^\epsilon_1 (\psi_0 u^\epsilon)+\sqrt{\delta}\int_{\mathbb{R}^d}(\psi_0 u^\epsilon)(x)e^{-ix\cdot\xi}\overline{\tilde{\beta}^{\delta,\epsilon}_1}(x;\xi)\,dx
\end{align}

\paragraph{Step 4:} In this step, we shall obtain bounds for $\tilde{\beta}_1^{\delta,\epsilon}$. This is done by employing the analyticity of the first regularized Bloch eigenfunction in a neighborhood of $\eta=0$. Let us write
\begin{align*}
	\phi_1^\delta(y;\eta)=\phi^\delta_1(y;0)+ \eta_l\frac{\partial\phi_1^\delta}{\partial\eta_l}(y;0) +\gamma^\delta(y;\eta),
\end{align*} where $\gamma^\delta(y;0)=0$, $\frac{\partial\gamma^\delta}{\partial\eta_l}(y;0)=0$ and $\sqrt{\delta}\gamma^\delta(\cdot;\eta)=O(|\eta|^2)$ in $L^\infty(U^\delta;H^1_\sharp(Q))$ where the order is uniform in $\delta$ on account of~\eqref{coercivity}. Therefore, $\displaystyle\sqrt{\delta}\frac{\partial^2\gamma^\delta}{\partial y_k^2}(\cdot;\eta)=O(|\eta|^2)$ in $L^\infty(U^\delta;H^{-1}_\sharp(Q))$ where the order is uniform in $\delta$.
Now, \begin{align}\label{analyticbloch}
	\phi_1^{\delta,\epsilon}(y^{'};\xi)=\phi_1^{\delta}\left(\frac{y^{'}}{\epsilon};\epsilon\xi\right)=\phi^\delta_1\left(\frac{y^{'}}{\epsilon};0\right)+ \epsilon\xi_l\frac{\partial\phi_1^\delta}{\partial\eta_l}\left(\frac{y^{'}}{\epsilon};0\right)+\gamma^\delta\left(\frac{y^{'}}{\epsilon};\epsilon\xi\right).
\end{align} 
Let us define $\displaystyle\alpha_l^{\delta,\epsilon}(y^{'})\coloneqq \frac{\epsilon}{i\phi_1^\delta(y^{'}/\epsilon;0)}\frac{\partial\phi_1^\delta}{\partial\eta_l}\left(\frac{y^{'}}{\epsilon};0\right)$, then $\alpha^{\delta,\epsilon}_l(y^{'})\in H^1_\sharp(\epsilon Q)$ solves the cell problem at $\epsilon$-scale posed in $\epsilon Q$, i.e.,
	\begin{align}\label{epsiloncellproblem}
	-D_{y^{'}}\cdot B^\epsilon(y^{'})D_{y^{'}} \alpha^{\delta,\epsilon}_l-\delta\Delta_{y^{'}}\alpha^{\delta,\epsilon}_l= D_{y^{'}}\cdot B^\epsilon(y^{'})e_l,
	\end{align}
which provides the estimate \begin{align}||D_{y^{'}}\alpha^{\delta,\epsilon}_l||^2_{L^2_\sharp(\epsilon Q)}+\delta||\nabla_{y^{'}} \alpha^{\delta,\epsilon}_l||^2_{L^2_\sharp(\epsilon Q)}\leq C,\end{align} for some generic constant $C$ not depending on $\epsilon$ and $\delta$. Therefore, we get 
\begin{align}\label{estimate2}
	\left(\sqrt{\delta}\Delta_{y^{'}}\alpha^{\delta,\epsilon}_l\right)\mbox{is bounded uniformly in } H^{-1}_\sharp(\epsilon Q).
\end{align}

Differentiating the equation~\eqref{analyticbloch} with respect to $y^{'}$ twice, we obtain
\begin{align*}
	\frac{\partial^2\phi_1^{\delta,\epsilon}}{\partial y^{'2}_k}(y^{'},\xi)=\xi_l\epsilon\frac{\partial^2}{\partial y^{'2}_k}\frac{\partial\phi_1^{\delta,\epsilon}}{\partial\eta_l}\left(y^{'};0\right)+\epsilon^{-2}\frac{\partial^2\gamma^{\delta}}{\partial y^{2}_k}\left(\frac{y^{'}}{\epsilon};\epsilon\xi\right).
\end{align*}
For $\xi$ belonging to the set $\{\xi: \epsilon\xi\in U^\delta\mbox{ and }|\xi|\leq M \}$, we have 
\begin{align*}
	\sqrt{\delta}\left|\frac{\partial^2\gamma^\delta}{\partial y_k^2}(\cdot;\eta)\right|\leq C\epsilon^2 M^2.
\end{align*}\vspace{-0.5cm}\begin{align}\label{estimate1}\mbox{Therefore, }\left(\sqrt{\delta}\epsilon^{-2}\frac{\partial^2\gamma^\delta}{\partial y_k^2}(y^{'}/\epsilon;\epsilon\xi)\right)\mbox{ is bounded uniformly in } L^2_{\text{loc}}(\mathbb{R}^d_\xi;H^{-1}_\sharp(\epsilon Q)).
\end{align}
From~\eqref{estimate2} and~\eqref{estimate1}, we have $\beta_1^{\delta,\epsilon}(y^{'},\xi)=\sqrt{\delta}\xi_l i\phi_1^{\delta}\left(\frac{y^{'}}{\epsilon};0\right)\Delta_{y^{'}}\alpha^{\delta,\epsilon}_l+\frac{\sqrt{\delta}}{\epsilon^2}\sum_{k=1}^M\frac{\partial^2\gamma^\delta}{\partial y_k^2}\left(\frac{y^{'}}{\epsilon};\epsilon\xi\right)$  is bounded uniformly in $L^{2}_{\text{loc}}(\mathbb{R}^d_\xi;H^{-1}_\sharp(\epsilon Q))$. As a consequence,we obtain $\left(\tilde{\beta}_1^{\delta,\epsilon}\right)$ is bounded uniformly in 
$L^{2}_{\text{loc}}(\mathbb{R}^d_\xi;H^{-1}_{\text{loc}}(\mathbb{R}^d))$.

\paragraph{Step 5:} Now, we are ready to pass to the limit $\epsilon\to 0$ in the equation~\eqref{Bloch}. In view of equation~\eqref{expansion}, equation~\eqref{Bloch} becomes \begin{align}\label{Blochexpanded}
\lambda_1^{\delta,\epsilon}(\xi)\mathcal{B}^\epsilon_1 (\psi_0 u^\epsilon)+\sqrt{\delta}\int_{\mathbb{R}^d}(\psi_0 u^\epsilon)(x)e^{-ix\cdot\xi}\overline{\tilde{\beta}^{\delta,\epsilon}_1}(x;\xi)\,dx=\mathcal{B}^{\delta,\epsilon}_1(\psi_0 f)(\xi)+\mathcal{B}^{\delta,\epsilon}_1g^\epsilon(\xi)+\mathcal{B}^{\delta,\epsilon}_1h^{\epsilon}(\xi).
\end{align} Let us denote $\displaystyle\Upsilon^{\delta,\epsilon}(\xi)=\int_{\mathbb{R}^d}(\psi_0 u^\epsilon)(x)e^{-ix\cdot\xi}\overline{\tilde{\beta}^{\delta,\epsilon}_1}(x;\xi)\,dx$. Let $K_2$ be a compact subset of $\mathbb{R}^d_\xi$. From the previous step, we have $$||\Upsilon^{\delta,\epsilon}||_{L^2(K_2)}\lesssim ||\tilde{\beta}_1^{\delta,\epsilon}||_{L^2(K_2;H^{-1}(K))} $$

Hence, $\Upsilon^{\delta,\epsilon}$ is bounded in $L^2_{\text{loc}}(\mathbb{R}^d_\xi)$ independent of $\delta$ and $\epsilon$. Therefore, it converges weakly to $\Upsilon^\delta$ in $L^{2}_{\loc}(\mathbb{R}^d_\xi)$ for a subsequence. Once more, since the sequence $\Upsilon^{\delta,\epsilon}$ is bounded uniformly in $\delta$, the weak limit $\Upsilon^\delta$ is also bounded uniformly in $\delta$.

The proofs of convergences of all terms except the second term on LHS in~\eqref{Blochexpanded} follows the same lines as in~\cite{Conca1997}. Therefore, passing to the limit in~\eqref{Blochexpanded} as $\epsilon\to 0$ we obtain for $\xi\in\mathbb{R}^d$
\begin{align}\label{Bloch2}
\frac{1}{2}\frac{\partial^2\lambda^\delta_1}{\partial\eta_k\partial\eta_l}(0)\xi_k\xi_l\widehat{\psi_o u^*}(\xi)+\sqrt{\delta}\Upsilon^\delta(\xi)&=(\psi_0 f)^{\widehat{}}(\xi)-\left(\frac{\partial \psi_0}{\partial x_k}(x)\sigma^*_k(x)\right)^{\bf\widehat{}}(\xi)\nonumber\\
&\qquad-i\xi_k q_{kl}^{*} \left(\frac{\partial \psi_0}{\partial x_l}(x)u^*(x)\right)^{\bf\widehat{}}(\xi),
\end{align} where $\sigma_k^*$ is the weak limit of the flux $a^\epsilon_{kl}(x)\frac{\partial u^\epsilon}{\partial x_l}(x)$.

\paragraph{Step 6:} Now, we may pass to the limit in equation~\eqref{Bloch2} as $\delta\to 0$. Using Theorem~\ref{Hessian}, Lemma~\ref{convergenceofapproxhomo}, and the uniform in $\delta$ bound for $\Upsilon^\delta$, we obtain the following equation.

\begin{align}\label{Bloch3}
q^*_{kl}\xi_k\xi_l\widehat{\psi_o u^*}(\xi)=(\psi_0 f)^{\widehat{}}(\xi)-\left(\frac{\partial \psi_0}{\partial x_k}(x)\sigma^*_k(x)\right)^{\bf\widehat{}}(\xi)-i\xi_k q_{kl}^{*} \left(\frac{\partial \psi_0}{\partial x_l}(x)u^*(x)\right)^{\bf\widehat{}}(\xi),
\end{align} where $\sigma_k^*$ is the weak limit of the flux $a^\epsilon_{kl}(x)\frac{\partial u^\epsilon}{\partial x_l}(x)$.

The rest of the steps involving the identification of $\sigma_k^*$ and the homogenized equation are the same as in~\cite{Conca1997} and are therefore omitted.

\ifdraft
\paragraph{Step 6:} Taking the inverse Fourier transform in the equation~\eqref{Bloch3} above, we obtain the following:
\begin{align}\label{eq1}
(\mathcal{A}^{hom}(\psi_0 u^*)(x))=\psi_0 f-\frac{\partial \psi_0}{\partial x_k}(x)\sigma^*_k(x)-q^*_{kl}\frac{\partial}{\partial x_k}\left(\frac{\partial \psi_0}{\partial x_l}(x)u^*(x)\right),
\end{align}
where the operator $\mathcal{A}^{hom}$ is defined in~\eqref{homoperator}. At the same time, calculating using Leibniz rule, we have:
\begin{align}\label{eq2}
(\mathcal{A}^{hom}(\psi_0 u^*)(x))=(\psi_0(x)\mathcal{A}^{hom}u^*(x))-q^*_{kl}\frac{\partial}{\partial x_k}\left(\frac{\partial \psi_0}{\partial x_l}(x)u^*(x)\right)-q_{kl}^*\frac{\partial\psi_0}{\partial x_k}(x)\frac{\partial u^*}{\partial x_l}(x)
\end{align}
Using equations~\eqref{eq1} and~\eqref{eq2}, we obtain
\begin{align}
\psi_0(x)\left(\mathcal{A}^{hom}u^*-f\right)(x)=\frac{\partial\psi_0}{\partial x_k}\left[q_{kl}^*\frac{\partial u^*}{\partial x_l}(x)-\sigma_k^*(x)\right].\end{align}
Let $\omega$ be a unit vector in $\mathbb{R}^d$, then $\psi_0(x)e^{ix\cdot\omega}\in\mathcal{D}(\Omega)$. On substituting in the above equation, we get, for all $k=1,2,\ldots,d$ and for all $\psi_0\in\mathcal{D}(\Omega)$,
\begin{align}
\psi_0(x)\left[q_{kl}^*\frac{\partial u^*}{\partial x_l}(x)-\sigma_k^*(x)\right]=0.
\end{align}
Let $x_0$ be an arbitrary point in $\Omega$ and let $\psi_0(x)$ be equal to $1$ near $x_0$, then for a small neighborhood of $x_0$:
\begin{align}
\mbox{ for } k=1,2,\ldots,d,~\left[q_{kl}^*\frac{\partial u^*}{\partial x_l}(x)-\sigma_k^*(x)\right]=0
\end{align}
However, $x_0\in\Omega$ is arbitrary, so that
\begin{align}
\mathcal{A}^{hom}u^*=f\mbox{ and }\sigma^*_k(x)=q_{kl}^*\frac{\partial u^*}{\partial x_l}(x).
\end{align}Thus,we have obtained the limit equation in the physical space. This finishes the proof of Theorem~\ref{homog}.
\fi

\bibliographystyle{plain}
\bibliography{mylit}
\end{document}